\newcommand{\ra}{\rightarrow}
\newcommand{\bb}[1]{\mathbb{#1}}
\newcommand{\prm}{\ell}
\newcommand{\cop}{\ell}
\newcommand{\Z}{\bb{Z}}
\newcommand{\Q}{\bb{Q}}
\newcommand{\R}{\bb{R}}
\newcommand{\C}{\bb{C}}
\newcommand{\Zprm}{\Z_\prm}
\newcommand{\Zcop}{\Z_\cop}
\newcommand{\ten}{\otimes}
\newcommand{\teno}[1]{\ten_{#1}}
\newcommand{\proo}[1]{\lim_{\leftarrow #1}}
\newcommand{\gal}[1]{\mathrm{Gal}(#1)}
\newcommand{\br}[1]{\bar{#1}} 
\newcommand{\st}{^\times}
\newcommand{\ie}{i.e. }
\newcommand{\Det}{\textstyle{\det}} 
\newcommand{\setm}{\! \smallsetminus \!}
\newcommand{\spec}{\mathrm{Spec}}
\newcommand{\het}{H_{\mathrm{\acute{e}t}}}
\newcommand{\oh}{\mathcal{O}}
\newcommand{\class}{\mathrm{Cl}}
\newcommand{\eg}{eg }
\newcommand{\spc}{\mbox{ }}
\newcommand{\Mod}{\spc\mathrm{mod}\spc}
\newcommand{\con}{\subseteq}
\newcommand{\inv}{\tau}
\newcommand{\ann}{\mathrm{ann}}
\newcommand{\iJ}[1]{\mathcal{J}(#1)}
\newcommand{\hJ}[2]{\mathcal{J}_{#2}^#1}
\newcommand{\E}{\mathcal{E}}
\newcommand{\Inf}{\mathrm{Inf}}
\newcommand{\tors}{\mathrm{Tors}}
\newcommand{\sat}{\spc | \spc}
\newcommand{\stick}{\theta}
\newcommand{\halfst}{\tilde{\stick}}
\newcommand{\rk}{\mathrm{rk}}
\newcommand{\rous}{\mu}
\newcommand{\rou}[1]{\rous(#1)}
\newcommand{\fitt}{\mathrm{Fitt}}
\newcommand{\Cprm}{\mathbb{C}_\prm}
\newcommand{\teich}{\omega}
\newcommand{\ab}{^{\mathrm{ab}}}
\newcommand{\BC}[1]{\mathrm{\Omega}^{#1}}
\newcommand{\irrch}[1]{\mathrm{ch}(#1)}
\newcommand{\Res}{\mathrm{Res}}
\newcommand{\Ind}{\mathrm{Ind}}
\newcommand{\chgrp}[1]{\widehat{#1}}
\newcommand{\cycq}[1]{\Q^{(#1)}}
\newcommand{\pwr}[1]{[\![#1]\!]}
\newcommand{\tsc}{\delta}
\newcommand{\VR}[1]{\mathcal{R}^{#1}}
\newcommand{\lint}{r}
\newcommand{\ncJ}[1]{I(#1)}
\newcommand{\comm}[1]{[#1,#1]}
\newcommand{\gf}{F}
\newcommand{\ef}{E}
\newcommand{\cycJ}{\iJ{\ef_n/\Q,S}}
\newtheorem{theorem}{Theorem}[section]
\newtheorem{dummy}[theorem]{}
\newtheorem{prop}[theorem]{Proposition}
\newtheorem{lemma}[theorem]{Lemma}
\newtheorem{definition}[theorem]{Definition}
\newtheorem{conj}[theorem]{Conjecture}
\newtheorem{question}[theorem]{Question}
\numberwithin{equation}{section}
\newenvironment{remark}
{
\vspace{0.25cm}
\emph{Remark}.
}
{
\vspace{1cm}
}
\newenvironment{proof}{\emph{Proof}.}{\hspace{\stretch{1}}\rule{1.5ex}{1.5ex} \vspace{5 mm}}
\begin{document}

\title{Functoriality of the canonical fractional Galois ideal}

\author{Paul Buckingham \and Victor Snaith}

\date{}

\maketitle

\begin{abstract}
The fractional Galois ideal of [Victor P. Snaith, Stark's conjecture and new Stickelberger phenomena, Canad. J. Math. 58 (2) (2006) 419--448] is a conjectural improvement on the higher Stickelberger ideals defined at negative integers, and is expected to provide non-trivial annihilators for higher $K$-groups of rings of integers of number fields. In this article, we extend the definition of the fractional Galois ideal to arbitrary (possibly infinite and non-abelian) Galois extensions of number fields under the assumption of Stark's conjectures, and prove naturality properties under canonical changes of extension. We discuss applications of this to the construction of ideals in non-commutative Iwasawa algebras.
\end{abstract}

\section{Introduction}

Let $\ef/\gf$ be a Galois extension of number fields with Galois group $G$. In seeking annihilators in $\Z[G]$ of the $K$-groups $K_{2n}(\oh_{\ef,S})$ ($S$ a finite set of places of $\ef$ containing the infinite ones), Stickelberger elements have long been a source of interest. This began with the classical Stickelberger theorem, showing that for abelian extensions $\ef/\Q$, annihilators of $\tors(K_0(\oh_{\ef,S}))$ can be constructed from Stickelberger elements. Coates and Sinnott later conjectured in \cite{cs:stickel} that the analogous phenomenon would occur for higher $K$-groups. However, defined in terms of values of $L$-functions at negative integers, these elements do not provide all the annihilators, because of the prevalent vanishing of the $L$-function values.

This difficulty is hoped to be overcome by considering the ``fractional Galois ideal'' introduced by the second author in \cite{snaith:rel,snaith:stark} and defined in terms of \emph{leading coefficients} of $L$-functions at negative integers under the assumption of the higher Stark conjectures. A version more suitable for the case of $\tors(K_0(\oh_{\ef,S})) = \class(\oh_{\ef,S})$ was defined in \cite{buckingham:frac} by the first author. Evidence that the fractional Galois ideal annihilates the appropriate $K$-groups (resp. class-groups) can be found in \cite{snaith:stark} (resp. \cite{buckingham:frac}). In the first case, it is \'etale cohomology that is annihilated, but this is expected to give $K$-theory by the Lichtenbaum--Quillen conjecture (see \cite[Section 1]{snaith:stark} for details).

With a view to relating the fractional Galois ideal to characteristic ideals in Iwasawa theory, we would like to describe how it behaves in towers of number fields. That it exhibits naturality in certain changes of extension was observed in particular cases in \cite{buckingham:frac}, and part of the aim of this paper is to explain these phenomena generally. Passage to subextensions corresponding to quotients of Galois groups will be of particular interest in the situation of non-abelian extensions, because of the relatively recent emergence of non-commutative Iwasawa theory in, for example, \cite{cfksv:main,fk:noncomm}. Consequently, the aims of this paper are

\begin{tabular}{cp{10cm}}
(i) & to prove formal properties of the fractional Galois ideal with respect to changes of extension, in the commutative setting first (\S \ref{2.3} to \S \ref{2.9}) \\
(ii) & to extend the definition of the fractional Galois ideal to non-abelian Galois extensions (\S \ref{4.1}), having previously defined it only for abelian extensions \\
(iii) & to show that it behaves well under passing to subextensions in the non-commutative setting also (Proposition \ref{4.4}) \\
(iv) & to show that in order for the non-commutative fractional Galois ideals to annihilate the appropriate \'etale cohomology groups, it is sufficient that the commutative ones do (\S \ref{5.4}).
\end{tabular}

We will also provide an explicit example (in the commutative case) in \S \ref{commutative example} illustrating how a limit of fractional Galois ideals gives the Fitting ideal for an inverse limit $\class_\infty$ of $\prm$-parts of class-groups. This should make clear the importance of taking leading coefficients of $L$-functions rather than just values, since it will be the part of the fractional Galois ideal corresponding to $L$-functions with first-order vanishing at $0$ which provides the Fitting ideal for the plus-part of $\class_\infty$.

In \S \ref{sec iwasawa theory}, we will conclude with a discussion of how the constructions of this paper fit into non-commutative Iwasawa theory. In particular, under some assumptions which, compared with the many conjectures permeating this area, are relatively weak, we will be able to give a partial answer to a question of Ardakov--Brown in \cite{ab:ringtheoretic} on constructing ideals in Iwasawa algebras.

\subsection{Error in Proposition~\ref{2.10}} \label{error}

Since the acceptance of the paper, the authors were made aware of a problem in Proposition~\ref{2.10}. It has to do with the fact that the induction map on representations is an additive homomorphism of representation rings, while the functoriality of $L$-functions refers to multiplication. Contrary to our expectations at the time, we have not been able to resolve this issue. We thank Andreas Nickel for bringing this problem to our attention.

\section{Notation and the Stark conjectures} \label{notation}

In what follows, by a Galois representation of a number field $F$ we shall mean a continuous, finite-dimensional complex representation of the absolute Galois group of $F$, which amounts to saying that the representation factors through the Galois group ${\rm Gal}(\ef/\gf)$ of a finite Galois extension $\ef/\gf$. We begin with the Stark conjecture (at $s=0$) and its generalizations to $s= -1, -2, -3, \ldots $ which were introduced in \cite{gross:higherstark} and \cite{snaith:stark} independently. 

Let $\Sigma(E)$ denote the set of embeddings of $E$ into the  complex numbers. For $r = 0,  -1, -2, -3, \ldots$ set
\[ Y_{r}(E) = \prod_{\Sigma(E)} \ (2 \pi i )^{-r} {\mathbb Z}  =  {\rm Map}( \Sigma(E) , (2 \pi i )^{-r} {\mathbb Z} )  \]
endowed with the $G({\mathbb C}/{\mathbb R})$-action diagonally on $\Sigma(E)$ and on $(2 \pi i )^{-r} $. If $c_{0}$ denotes complex conjugation, the action of $c_{0}$ and $G$ commute so that
the fixed points of $Y_{r}(E)$ under $c_{0}$, denoted by $Y_{r}(E)^{+}$, form a $G$-module. It is easy to see that the rank of $ Y_{r}(E)^{+} $ is given by
\[ \rk_{{\mathbb Z}}( Y_{r}(E)^{+} ) = \left\{
\begin{array}{ll}
r_{2} & {\rm if } \   r  \  {\rm is \ odd}, \\
r_{1} + r_{2} & {\rm if } \   r \geq 0  \  {\rm is \ even}   .
\end{array} \right. \]
where $| \Sigma(E)|  = r_{1} + 2r_{2}$ and $r_{1}$ is the number of real embeddings of $E$.

\subsection{Stark regulators} \label{1.2}
 
We begin with a slight modification of the original Stark regulator \cite{tate:stark}. Now let $G$ denote the Galois group of an extension of number fields $\ef/\gf$. 
We extend the Dirichlet regulator homomorphism to the Laurent polynomials with coefficients in ${\cal O}_{E}$ to give an ${\mathbb R}[G]$-module isomorphism of the form
\[ R_{E}^{0} :  K_{1}({\cal O}_{E} \langle t^{\pm 1} \rangle ) \otimes {\mathbb R} = {\cal O}_{E} \langle t^{\pm 1} \rangle\st \otimes {\mathbb R} \stackrel{\cong}{\ra} Y_{0}(E)^{+} \otimes {\mathbb R} \cong  {\mathbb R}^{r_{1}+r_{2}}   \]
by the formulae, for $u \in {\cal O}_{E} \st$,
\[  R_{E}^{0}( u ) =  \sum_{\sigma \in   \Sigma(E)}  \  {\rm log}(|\sigma(u)|) \cdot  \sigma  \]
and
\[     R_{E}^{0}( t  ) =   \sum_{\sigma \in   \Sigma(E)}  \    \sigma .      \]
The existence of this isomorphism implies (see \cite[Section 12.1]{serre:linear} and \cite[p.26]{tate:stark}) that there exists at least one   ${\mathbb Q}[G]$-module isomorphism of the form
\[ f_{E }^{0} :  {\cal O}_{E} \langle t^{\pm 1} \rangle\st \otimes {\mathbb Q}
\stackrel{\cong}{\ra} Y_{0}(E )^{+} \otimes {\mathbb Q}  .  \]
For any choice of $ f_{E }^{0}$ Stark forms the composition
\[ R_{E }^{0} \cdot (f_{E }^{0})^{-1} :   Y_{0}(E )^{+} \otimes {\mathbb C}
\stackrel{\cong}{\ra} Y_{0}(E )^{+} \otimes {\mathbb C}  \]
which is an isomorphism of complex representations of $G$. Let $V$ be a finite-dimensional complex representation of $G$ whose contragredient is denoted by $V^{\vee}$. The Stark regulator is defined to be the exponential homomorphism $V \mapsto  R(V, f_{E }^{0})$, from representations to non-zero complex numbers, given by
\[  R(V, f_{E }^{0}) = \Det( (R_{E}^{0} \cdot  (f_{E }^{0})^{-1})_{*} \in {\rm Aut}_{{\bf C}}({\rm Hom}_{G}( V^{\vee} ,  Y_{0}(E )^{+} \otimes {\mathbb C}) ))  \]
where $ (R_{E }^{0} \cdot (f_{E }^{0})^{-1})_{*}$ is composition with $ R_{E }^{0} \cdot (f_{E }^{0})^{-1}$.

For $r =  -1, -2, -3, \ldots$ there is an isomorphism of the form \cite{quillen:ktheory}
\[    K_{1-2r}({\cal O}_{E} \langle t^{\pm 1} \rangle ) \otimes {\mathbb Q}  \cong 
 K_{1-2r}({\cal O}_{E}  ) \otimes {\mathbb Q}   \]
because $ K_{-2r}({\cal O}_{E}  )$ is finite. Therefore the Borel regulator homomorphism defines an  ${\mathbb R}[G]$-module isomorphism of the form
\[ R_{E}^{r} :  K_{1-2r}({\cal O}_{E} \langle t^{\pm 1} \rangle ) \otimes {\mathbb R} = K_{1-2r}({\cal O}_{E} ) \otimes {\mathbb R} \stackrel{\cong}{\ra} Y_{r}(E)^{+} \otimes {\mathbb R} . \]
Choose a  ${\mathbb Q}[G]$-module isomorphism of the form
\[ f_{E }^{r} :  K_{1-2r}({\cal O}_{E} \langle t^{\pm 1} \rangle ) \otimes {\mathbb Q} \stackrel{\cong}{\ra} Y_{r}(E)^{+} \otimes {\mathbb Q} \]
and form the analogous Stark regulator, $(V \mapsto  R(V, f_{E }^{r})) $, from representations to non-zero complex numbers given by
\[  R(V, f_{E }^{r}) = \Det( (R_{E}^{r} \cdot  (f_{E }^{r})^{-1})_{*} \in {\rm Aut}_{{\bf C}}({\rm Hom}_{G}( V^{\vee} ,  Y_{r}(E )^{+} \otimes {\mathbb C}) )) .   \]

\subsection{Stark's conjectures} \label{stark conjectures}

Let $R(G)$ denote the complex representation ring of the finite group $G$; that is, $R(G) = K_{0}({\mathbb C}[G])$. Since $V$ determines a Galois representation of $F$, we have a non-zero complex number $L_{F}^{*}(r, V )$ given by the leading coefficient of the Taylor series at $s=r$ of the Artin $L$-function associated to $V$  (\cite{martinet:lfunctions}, \cite[p.23]{tate:stark}). 

We may modify $ R(V, f_{E }^{r})$ to give another exponential homomorphism
\[  {\cal R}_{f_{E }^{r}}  \in {\rm Hom}( R( G) ,{\mathbb C}\st)   \]
defined by
\[ {\cal R}_{f_{E }^{r}}(V) = \frac{R(V,f_{E }^{r})}{L_{F}^{*}( r , V) }.  \]
Let $\overline{{\mathbb Q}}$ denote the algebraic closure of the rationals in the complex numbers and let $\Omega_{{\mathbb Q}}$ denote the absolute Galois group of the rationals, which acts continuously on $R( G) $ and $\overline{{\mathbb Q}}\st$. The Stark conjecture asserts that for each $r= 0, -1, -2, -3, \ldots$
\[  {\cal R}_{f_{E }^{r}} \in {\rm Hom}_{\Omega_{{\mathbb Q}}}( R( G) , \overline{{\mathbb Q}}\st)  \subseteq {\rm Hom}( R( G ) ,{\mathbb C}\st) . \]
In other words, ${\cal R}_{f_{E }^{r}}(V)$ is an algebraic number for each $V$ and for all $z \in \Omega_{{\mathbb Q}}$ we have $z({\cal R}_{f_{E }^{r}}(V)) = {\cal R}_{f_{E }^{r}}(z(V))$. Since any two choices of $f_{E}^{r}$ differ by multiplication by a ${\mathbb Q}[G]$-automorphism, the truth of the conjecture is independent of the choice of $f_{E }^{r}$ (\cite{tate:stark} pp.28-30).  

When $s=0$ the conjecture which we have just formulated apparently differs from the classical Stark conjecture of \cite{tate:stark}, therefore we shall pause to show that the two conjectures are equivalent. For the classical Stark conjecture one replaces $Y_{0}(E )^{+}$ by $X_{0}(E)^{+}$ where $X_{0}(E)$ is the kernel of the augmentation homomorphism $Y_{0}(E ) \ra  {\mathbb Z}$, which adds together all the coordinates. The Dirichlet regulator  gives an ${\mathbb R}[G]$-module isomorphism
\[ \tilde{R}_{E}^{0} :  {\cal O}_{E}\st  \otimes {\mathbb R} \stackrel{\cong}{\ra} X_{0}(E)^{+} \otimes {\mathbb R}  \]
and choosing a ${\mathbb Q}[G]$-module isomorphism
\[ \tilde{f}_{E}^{0} :  {\cal O}_{E}\st  \otimes {\mathbb Q} \stackrel{\cong}{\ra} X_{0}(E)^{+} \otimes {\mathbb Q}  \]
we may form
\[ \tilde{R}_{E }^{0} \cdot (\tilde{f}_{E }^{0})^{-1} :   X_{0}(E )^{+} \otimes {\mathbb C} \stackrel{\cong}{\ra} X_{0}(E )^{+} \otimes {\mathbb C}  .  \]
Taking its Stark determinant we obtain $\tilde{R}(V, \tilde{f}_{E }^{0})$ and finally
\[   \tilde{{\cal R}}_{ \tilde{f}_{E}^{0}}(V) = \frac{\tilde{R}(V, \tilde{f}_{E }^{0})}{L_{F}^{*}(0 , V) }.  \]

\begin{prop}
In \S \ref{stark conjectures}
\[   {\cal R}_{ f_{E}^{0}} \in {\rm Hom}_{\Omega_{{\mathbb Q}}}( R( G) , \overline{{\mathbb Q}}\st)  \subseteq {\rm Hom}( R( G ) ,{\mathbb C}\st)      \]
if and only if
\[  \tilde{{\cal R}}_{ \tilde{f}_{E}^{0}}  \in {\rm Hom}_{\Omega_{{\mathbb Q}}}( R( G) , \overline{{\mathbb Q}}\st)  \subseteq {\rm Hom}( R( G ) ,{\mathbb C}\st)  \]
independently of the choice of $f_{E}^{0}$ or $\tilde{f}_{E}^{0}$.
\end{prop}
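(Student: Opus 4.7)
\vspace{0.25cm}
\emph{Proof proposal}. The plan is to exhibit the ``classical'' pair $(\tilde{R}_E^0, \tilde{f}_E^0)$ as a direct summand of $(R_E^0, f_E^0)$ for a suitable choice of $f_E^0$, so that the two Stark determinants literally coincide. Dividing by $L_\gf^*(0, V)$ then yields $\mathcal{R}_{f_E^0}(V) = \tilde{\mathcal{R}}_{\tilde{f}_E^0}(V)$ for every $V \in R(G)$, from which the stated equivalence is immediate; independence of the choice of $f_E^0$ (resp. $\tilde{f}_E^0$), already recorded in the excerpt, transfers through this identification.

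The key is a compatible $G$-equivariant direct-sum decomposition on each side. On the source, $\oh_E\gen{t^{\pm 1}}\st = \oh_E\st \oplus \gen{t}$ is a $G$-module decomposition with trivial $G$-action on $\gen{t}$. On the target, put $\eta = \sum_{\sigma \in \Sigma(E)} \sigma$; since the $r = 0$ coefficient action is trivial, both $G$ and $c_0$ act on $Y_0(E)$ via permutation of $\Sigma(E)$, so $\eta$ is fixed by both, and the augmentation $Y_0(E) \to \Z$ splits $G$- and $c_0$-equivariantly over $\Q$ via $1 \mapsto \eta / |\Sigma(E)|$. Taking $c_0$-fixed parts gives
\[
Y_0(E)^+ \ten \Q = (X_0(E)^+ \ten \Q) \oplus \Q \cdot \eta,
\]
with $\Q \cdot \eta$ a trivial $G$-module. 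The product formula $\prod_\sigma |\sigma(u)| = |N_{E/\Q}(u)| = 1$ for $u \in \oh_E\st$ forces $R_E^0(u) \in X_0(E)^+ \ten \R$, so $R_E^0$ restricts to $\tilde{R}_E^0$ on $\oh_E\st \ten \R$; and by definition $R_E^0(t) = \eta$. Hence $R_E^0$ is the direct sum of $\tilde{R}_E^0$ and the isomorphism $\gen{t} \ten \R \to \R \cdot \eta$, $t \mapsto \eta$.

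Exploiting the independence of the conjecture from the choice of rational trivialization, take $f_E^0 = \tilde{f}_E^0 \oplus \mu_\Q$, where $\mu_\Q : \gen{t} \ten \Q \to \Q \cdot \eta$ sends $t \mapsto \eta$. Then $R_E^0 \cdot (f_E^0)^{-1}$ is the direct sum of $\tilde{R}_E^0 \cdot (\tilde{f}_E^0)^{-1}$ on $X_0(E)^+ \ten \C$ with the identity on $\C \cdot \eta$. Applying $\Hom_G(V^\vee, -)$ preserves this direct sum, and the determinant of a block-diagonal endomorphism is the product of the determinants on each block, so $R(V, f_E^0) = \tilde{R}(V, \tilde{f}_E^0) \cdot 1 = \tilde{R}(V, \tilde{f}_E^0)$. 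The only genuinely non-formal ingredient is the $G$- and $c_0$-equivariance of the target splitting, which reduces to the invariance of $\eta$; everything else is routine determinant bookkeeping.
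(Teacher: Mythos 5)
Your proof is correct and takes essentially the same approach as the paper's: both isolate the rank-one piece contributed by $t$ on the source and by the augmentation (equivalently $\eta$) on the target, and reduce the comparison of $\mathcal{R}_{f_E^0}$ with $\tilde{\mathcal{R}}_{\tilde{f}_E^0}$ to multiplicativity of the Stark determinant over that decomposition. The only difference is cosmetic: the paper fills in commutative diagrams with arbitrary compatible $f_E^0$, $\bar{f}_E^0$ and observes that the quotient regulator $\bar{R}_E^0$ is a rational scalar, whereas you choose $f_E^0 = \tilde{f}_E^0 \oplus \mu_\Q$ so that the quotient block is literally the identity and the two Stark regulators coincide exactly.
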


\begin{proof}
Given any ${\mathbb Q}[G]$-isomorphism $\tilde{f}_{E}^{0}$ we may fill in the following commutative diagram by ${\mathbb Q}[G]$-isomorphisms  $f_{E}^{0}$ and $\overline{f}_{E}^{0}$. Conversely, given any ${\mathbb Q}[G]$-isomorphisms $f_{E}^{0}$ and $\overline{f}_{E}^{0}$ we may fill in the diagram
with a ${\mathbb Q}[G]$-isomorphism $\tilde{f}_{E}^{0}$.  
\[ \xymatrix{
\oh_E\st \teno{\Z} \Q \ar[r] \ar[d]^{\tilde{f}_E^0} & \oh_E[t^{\pm 1}]\st \teno{\Z} \Q \ar[r] \ar[d]^{f_E^0} & \Q \ar[d]^{\br{f}_E^0} \\
X_0(E)^+ \teno{\Z} \Q \ar[r] & Y_0(E)^+ \teno{\Z} \Q \ar[r] & \Q
} \]

Similarly there is a commutative diagram in which the vertical arrows are reversed, ${\mathbb Q}$ is replaced by ${\mathbb R}$ and $\tilde{f}_{E}$,  $f_{E}$ and $\overline{f}_{E}$ by $\tilde{R}_{E}^{0} $, $R_{E}^{0}$ and $\overline{R}_{E }^{0}$, respectively.  Furthermore $\overline{R}_{E }^{0}$ is multiplication by a rational number. The result now follows from the multiplicativity of the determinant in short exact sequences.
\end{proof}

We shall be particularly interested in the case when $G $ is abelian, in which case the following observation is important. Let $\chgrp{G} = {\rm Hom}(G,  \overline{{\mathbb Q}}\st)$ denote the set of characters on $G$ and let ${\mathbb Q}(\chi)$ denote the field generated by the character values of a representation $\chi$. We may identify ${\rm Hom}_{\Omega_{{\mathbb Q}}}( R(G) , \overline{{\mathbb Q}})  $with the ring ${\rm Map}_{\Omega_{{\mathbb Q}}}(\chgrp{G},  \overline{{\mathbb Q}})$.

\begin{prop} \label{1.5}
Let $G$ be a finite abelian group. Then there exists an isomorphism of rings
\[ \lambda_{G} : {\rm Map}_{\Omega_{{\mathbb Q}}}(\chgrp{G},  \overline{{\mathbb Q}})={\rm Hom}_{\Omega_{{\mathbb Q}}}( R(G) , \overline{{\mathbb Q}}) \stackrel{\cong}{\ra}  {\mathbb Q}[G]  \]
given by
\[  \lambda_{G}(h) =  \sum_{\chi \in \chgrp{G}} \  h(\chi) e_{\chi} \]
where
\[  e_{\chi} = |G|^{-1} \sum_{g \in G} \ \chi(g) g^{-1} \in {\mathbb Q}(\chi)[G]. \]
In particular there is an isomorphism of unit groups
\[ \lambda_{G} : {\rm Hom}_{\Omega_{{\mathbb Q}}}( R(G) , \overline{{\mathbb Q}}\st) \stackrel{\cong}{\ra}  {\mathbb Q}[G]\st .   \]
\end{prop}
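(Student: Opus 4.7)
The plan is to exhibit $\lambda_{G}$ as the $\Omega_{\Q}$-fixed points of the inverse of the Wedderburn decomposition for $\br{\Q}[G]$. Since $G$ is abelian and $\br{\Q}$ contains all roots of unity, the group algebra $\br{\Q}[G]$ is split semisimple. By the standard orthogonality relations, the elements $e_{\chi} = |G|^{-1} \sum_{g \in G} \chi(g) g^{-1}$ form a complete system of pairwise orthogonal primitive idempotents in $\br{\Q}[G]$, and the $\br{\Q}$-algebra homomorphism
\[ \Phi \colon \br{\Q}[G] \lra \prod_{\chi \in \chgrp{G}} \br{\Q}, \qquad x \lmapsto (\chi(x))_{\chi}, \]
is therefore an isomorphism with inverse $(a_\chi)_\chi \lmapsto \sum_\chi a_\chi e_\chi$. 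Because $G$ is abelian, $R(G)$ is the free $\Z$-module on $\chgrp{G}$, so $\prod_{\chi} \br{\Q}$ is identified with $\mathrm{Map}(\chgrp{G}, \br{\Q}) = \Hom(R(G), \br{\Q})$, and under this identification the inverse of $\Phi$ is precisely the formula defining $\lambda_{G}$.

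Next I would verify Galois equivariance. Let $\Omega_{\Q}$ act on $\br{\Q}[G]$ through the coefficients and on $\chgrp{G}$ by $z \cdot \chi = z \of \chi$. Applying $z$ to the coefficients in the defining formula for $e_\chi$ yields $z(e_\chi) = e_{z \cdot \chi}$. Hence $\Phi^{-1}$ is $\Omega_{\Q}$-equivariant provided $\prod_\chi \br{\Q}$ is given the action $(z \cdot h)(\chi) = z(h(z^{-1} \chi))$, whose fixed points coincide exactly with $\mathrm{Map}_{\Omega_{\Q}}(\chgrp{G}, \br{\Q})$. Taking $\Omega_{\Q}$-fixed points on both sides then produces the desired ring isomorphism
\[ \lambda_{G} \colon \mathrm{Map}_{\Omega_{\Q}}(\chgrp{G}, \br{\Q}) \stackrel{\cong}{\ra} (\br{\Q}[G])^{\Omega_{\Q}} = \Q[G], \]
using $\br{\Q}^{\Omega_{\Q}} = \Q$ for the final identification. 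The claim about unit groups is then automatic, since any ring isomorphism restricts to a bijection between unit groups.

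The only step requiring genuine attention is the equivariance identity $z(e_\chi) = e_{z \cdot \chi}$, which however is a one-line calculation. All other ingredients — character orthogonality, the splitting of $\br{\Q}[G]$ as a product of copies of $\br{\Q}$, and Galois descent — are standard, so I do not foresee any substantial obstacle.
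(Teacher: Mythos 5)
Your proof is correct and follows essentially the same route as the paper's: both identify $\overline{\Q}[G] \cong \prod_{\chi}\overline{\Q}$ via the character isomorphism (what the paper calls $\psi$, your $\Phi$), observe Galois equivariance, and obtain $\lambda_G$ by passing to $\Omega_\Q$-fixed points. You spell out slightly more of the verification (the orthogonality of the $e_\chi$ and the identity $z(e_\chi)=e_{z\cdot\chi}$) than the paper, which simply cites Lang and says the inverse check is straightforward, but the strategy is identical.
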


\begin{proof}
There is a well-known isomorphism of rings (\cite{lang:algebra2nd} p.648)
\[  \psi : \overline{{\mathbb Q}}[G]  \ra  \prod_{  \chi \in \chgrp{G} } \overline{{\mathbb Q}}  = {\rm Map}( \chgrp{G} ,  \overline{{\mathbb Q}})  \]
given by $\psi( \sum_{g \in G} \lambda_{g} g)( \chi) =  \sum_{g \in G} \lambda_{g} \chi(g)$. If $\Omega_{{\mathbb Q}}$ acts on $\overline{{\mathbb Q}}$ and $\chgrp{G}$ in the canonical manner, then $\psi$ is Galois equivariant and induces an isomorphism of $\Omega_{{\mathbb Q}}$-fixed points of the form
\[  {\mathbb Q}[G]  =  (\overline{{\mathbb Q}}[G] )^{\Omega_{{\mathbb Q}}} \cong  
 {\rm Map}_{\Omega_{{\mathbb Q}}}( \chgrp{G} ,  \overline{{\mathbb Q}}) \cong {\rm Hom}_{\Omega_{{\mathbb Q}}}( R(G) , \overline{{\mathbb Q}})   .  \]
It is straightforward to verify that this isomorphism is the inverse of $\lambda_{G}$.
\end{proof}

\section{The canonical fractional Galois ideal ${\cal J}_{\ef/\gf}^{r}$ in the abelian case}

\subsection{Definition of ${\cal J}_{\ef/\gf}^r$} \label{2.1}

In this section we recall the canonical fractional Galois ideal introduced in \cite{snaith:stark} (see also \cite{buckingham:frac}, \cite{snaith:equiv} and  \cite{snaith:rel}). In \cite{snaith:stark} this was denoted merely by  ${\cal J}_{E}^{r}$ but in this paper we shall need to keep track of the base field.

As in \S \ref{stark conjectures}, let $\ef/\gf$ be a Galois extension of number fields. Throughout this section we shall assume that the Stark conjecture of \S \ref{stark conjectures} is true for all $\ef/\gf$ and  that $G = {\rm Gal}(\ef/\gf)$ is abelian. Therefore, by Proposition \ref{1.5}, for each $r = 0, -1, -2, -3, \ldots$ 
we have an element
\[  {\cal R}_{f_{E }^{r}}  \in {\rm Hom}_{\Omega_{{\mathbb Q}}}( R( G ) , \overline{{\mathbb Q}}\st)  \cong  {\mathbb Q}[G]\st  \]
which depends upon the choice of a ${\mathbb Q}[G]$-isomorphism $f_{E}^{r}$ in \S \ref{stark conjectures}.

Let $ \alpha \in  {\rm End}_{{\mathbb Q}[G]}( Y_{r}(E)^{+} \otimes {\mathbb Q} )$ and extend this by the identity on the $(-1)$-eigenspace of complex conjugation $Y_{r}(E)^{-}   \otimes {\mathbb Q} $ to give
\[    \alpha  \oplus   1  \in   {\rm End}_{{\mathbb Q}[G]}( Y_{r}(E) \otimes {\mathbb Q} )   .   \]
Since $Y_{r}(E) \otimes {\mathbb Q} $ is free over ${\mathbb Q}[G]$, we may form the determinant
\[    {\rm det}_{{\mathbb Q}[G] }( \alpha \oplus 1) \in {\mathbb Q}[G]   .   \]
In terms of the isomorphism of Proposition \ref{1.5}, ${\rm det}_{{\mathbb Q}[G] }( \alpha \oplus 1)$ corresponds to the function which sends $\chi \in \chgrp{G}$ to the determinant of the endomorphism of  $e_{\chi} Y_{r}(E) \otimes \overline{{\mathbb Q}}$ induced by $\alpha \oplus 1$.

Following \cite[Section 4.2]{snaith:stark} (see also \cite{snaith:rel,snaith:equiv}), define ${\cal I}_{f_{E}^{r}}$ to be the (finitely generated) ${\mathbb Z}[1/2][G]$-submodule of ${\mathbb Q}[ G ]$ generated by all the elements \linebreak ${\rm det}_{{\mathbb Q}[G] }( \alpha \oplus 1) $ satisfying the integrality condition
\[  \alpha \cdot   f_{E}^{r}( K_{1-2r}({\cal O}_{E}[t^{\pm 1}]))      \subseteq Y_{r}(E) .  \]

Define ${\cal J}_{\ef/\gf}^{r}$ to be the finitely generated ${\mathbb Z}[1/2][G]$-submodule of ${\mathbb Q}[ G ]$ given by
\[ {\cal J}_{\ef/\gf}^{r} =    {\cal I}_{ f_{E}^{r}} \cdot  \inv(   {\cal R}_{f_{E}^{r}}^{-1})  \]
where $\inv$ is the  automorphism of the group-ring induced by sending each $g \in G$ to its inverse.

\begin{prop}{(\cite[Prop.4.5]{snaith:stark})}
Let $\ef/\gf$ be a Galois extension of number fields with abelian Galois group $G$. Then, assuming that the Stark conjecture of \S \ref{stark conjectures} holds for $\ef/\gf$ for $r= 0, -1, -2, -3, \ldots$, the finitely generated ${\mathbb Z}[1/2][G]$-submodule ${\cal J}_{\ef/\gf}^{r}$ of ${\mathbb Q}[ G ]$ just defined is independent of the choice of $f_{E}^{r}$.
\end{prop}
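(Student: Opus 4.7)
The plan is to compare the constructions arising from two admissible $\Q[G]$-isomorphisms $f_E^r$ and $g_E^r$. They differ by a unique $\Q[G]$-automorphism $\phi$ of $Y_r(E)^+ \otimes \Q$, namely $g_E^r = \phi \circ f_E^r$, and I set
\[
d = \det_{\Q[G]}(\phi \oplus 1) \in \Q[G]\st,
\]
using the $\oplus\, 1$-extension to the free $\Q[G]$-module $Y_r(E) \otimes \Q$ that appears in the definition of $\I_{f_E^r}$. The proof reduces to the two identities
\[
\I_{g_E^r} = d^{-1} \cdot \I_{f_E^r}, \qquad \inv(\mathcal{R}_{g_E^r}^{-1}) = d \cdot \inv(\mathcal{R}_{f_E^r}^{-1}),
\]
since $\Q[G]$ is commutative (as $G$ is abelian), and so
\[
\I_{g_E^r} \cdot \inv(\mathcal{R}_{g_E^r}^{-1}) = d^{-1} d \cdot \I_{f_E^r} \cdot \inv(\mathcal{R}_{f_E^r}^{-1}) = \I_{f_E^r} \cdot \inv(\mathcal{R}_{f_E^r}^{-1}).
\]

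The first identity is the formal one. Setting $\beta := \alpha \phi$, an endomorphism $\alpha \in \End_{\Q[G]}(Y_r(E)^+ \otimes \Q)$ satisfies the integrality constraint for $g_E^r$ if and only if $\beta$ satisfies it for $f_E^r$, giving a bijection between the generating sets. From $\alpha \oplus 1 = (\beta \oplus 1)(\phi^{-1} \oplus 1)$ and multiplicativity of $\det_{\Q[G]}$ on the free module $Y_r(E) \otimes \Q$, we obtain $\det_{\Q[G]}(\alpha \oplus 1) = d^{-1} \det_{\Q[G]}(\beta \oplus 1)$, and taking $\Z[1/2][G]$-spans gives the claim.

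The second identity is the conceptual heart of the argument. Unwinding the definitions with $g_E^r = \phi \circ f_E^r$ yields $(R_E^r (g_E^r)^{-1})_* = (R_E^r (f_E^r)^{-1})_* \circ (\phi^{-1})_*$ on $\Hom_G(V^\vee, Y_r(E)^+ \otimes \C)$, and so
\[
\mathcal{R}_{g_E^r}(V)/\mathcal{R}_{f_E^r}(V) = \Det\bigl( (\phi^{-1})_* \bigm| \Hom_G(V^\vee, Y_r(E)^+ \otimes \C) \bigr).
\]
For a one-dimensional character $\chi$ of $G$ (which suffices since $G$ is abelian), the right-hand side identifies with $\Det(\phi^{-1} \mid e_{\chi^{-1}}(Y_r(E)^+ \otimes \C))$, the appearance of $\chi^{-1}$ coming from the contragredient $V^\vee$. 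On the other hand, the isomorphism $\lambda_G$ of Proposition \ref{1.5} sends $d \in \Q[G]$ to the function $\chi \mapsto \Det(\phi \mid e_\chi(Y_r(E)^+ \otimes \br{\Q}))$, because $\phi \oplus 1$ is the identity on the $(-1)$-eigenspace $Y_r(E)^- \otimes \br{\Q}$. The switch $\chi \leftrightarrow \chi^{-1}$ between these two expressions is exactly the effect of the involution $\inv$ on $\Q[G]$ under $\lambda_G$, so $\mathcal{R}_{g_E^r} \cdot \mathcal{R}_{f_E^r}^{-1} = \inv(d^{-1})$ in $\Q[G]\st$. Inverting and then applying $\inv$ (which is an involution) yields the second identity.

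The main obstacle is the bookkeeping in the second identity: tracing how the contragredient $V^\vee$ in the Stark regulator forces the character switch $\chi \mapsto \chi^{-1}$ and recognizing this switch as the effect of $\inv$ on $\Q[G]$. This is precisely why the definition of $\mathcal{J}_{E/F}^r$ had to include $\inv$ in the first place: it is exactly what is needed to absorb the contragredient. Beyond this, the argument is purely formal, and the final cancellation $d^{-1} \cdot d = 1$ rests on the commutativity of $\Q[G]$.
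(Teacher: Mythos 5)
Your proof is correct. The paper itself does not reproduce a proof of this proposition, citing it instead to Snaith's original paper, but the argument you give --- writing $g_E^r = \phi \circ f_E^r$, showing $\mathcal{I}_{g_E^r} = d^{-1}\mathcal{I}_{f_E^r}$ and $\inv(\mathcal{R}_{g_E^r}^{-1}) = d\, \inv(\mathcal{R}_{f_E^r}^{-1})$ with $d = \det_{\Q[G]}(\phi \oplus 1)$, and observing that the contragredient in $\Hom_G(V^\vee,\cdot)$ is exactly what the involution $\inv$ is there to absorb --- is the standard argument and, as far as can be judged, the one the cited reference gives.
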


\subsection{Naturality examples} \label{nat examples}

Given an extension $\ef/\gf$ of number fields satisfying the Stark conjecture at $s = 0$ and a finite set of places $S$ of $\gf$ containing the infinite places, let $\iJ{\ef/\gf,S}$ denote the fractional Galois ideal as defined in \cite{buckingham:frac}, a slight modification of the one just defined so that we can take into account finite places. Let us consider the following situation: $\prm$ is an odd prime, $\ef_n = \Q(\zeta_{\prm^{n+1}})$ for a primitive $\prm^{n+1}$th root of unity $\zeta_{\prm^{n+1}}$ ($n \geq 0$), and $S = \{\infty,\prm\}$. The descriptions below of $\iJ{\ef_n/\Q,S}$ and $\iJ{\ef_n^+/\Q,S}$ are provided in \cite[Section 4]{buckingham:frac}:
\begin{eqnarray}
\iJ{\ef_n/\Q,S} &=& \frac{1}{2} e_+ \ann_{\Z[G_n]}(\oh_{\ef_n^+,S}\st/\E_n^+) \oplus \Z[G_n]\stick_{\ef_n/\Q,S} \label{full j desc} \label{recap full j desc} \\
\iJ{\ef_n^+/\Q,S} &=& \frac{1}{2} \ann_{\Z[G_n^+]}(\oh_{\ef_n^+,S}\st/\E_n^+) \label{real j desc}
\end{eqnarray}
where $G_n = \gal{\ef_n/\Q}$, $G_n^+ = \gal{\ef_n^+/\Q}$, $\E_n^+$ is the $\Z[G_n^+]$-submodule of $\oh_{\ef_n^+,S}\st$ generated by $-1$ and $(1 - \zeta_{\prm^{n+1}})(1 - \zeta_{\prm^{n+1}}^{-1})$, and $\stick_{\ef_n/\Q,S}$ is the Stickelberger element at $s = 0$. Also, $e_+ = \frac{1}{2}(1+c)$ is the plus-idempotent for complex conjugation $c \in G_n$.

It is immediate from these descriptions that the natural maps $\Q[G_n] \ra \Q[G_n^+]$, $\Q[G_n] \ra \Q[G_{n-1}]$ and $\Q[G_n^+] \ra \Q[G_{n-1}^+]$ give rise to a commutative diagram
\begin{equation} \label{quot examples}
\xymatrix{
\iJ{\ef_n/\Q,S} \ar[r] \ar[d] & \iJ{\ef_n^+/\Q,S} \ar[d] \\
\iJ{\ef_{n-1}/\Q,S} \ar[r] & \iJ{\ef_{n-1}^+/\Q,S} .
}
\end{equation}
($\oh_{\ef_{n-1}^+,S}\st/\E_{n-1}^+$ embeds into $\oh_{\ef_n^+,S}\st/\E_n^+$, and Stickelberger elements are well known (e.g. \cite{hayes:nonabstick}) to map to each other in this way.)

Now suppose that $\prm \equiv 3 \Mod 4$, so that $\ef_n$ contains the imaginary quadratic field $\gf = \Q(\sqrt{-\prm})$. Again, letting $S_\gf$ consist of the infinite place of $\gf$ and the unique place above $\prm$, $\iJ{\ef_n/\gf,S_\gf}$ has a simple description. Indeed, if $H_n = \gal{\ef_n/\gf}$, then
\begin{equation} \label{lifted j desc}
\iJ{\ef_n/\gf,S_\gf} = \frac{1}{\rous_n} \ann_{\Z[H_n]}(\oh_{\ef_n,S}\st/\E_n)
\end{equation}
where $\E_n$ is generated over $\Z[H_n]$ by $\zeta_{\prm^{n+1}}$ and $(1 - \zeta_{\prm^{n+1}})^{\rous_n \halfst_n}$. Here, $\rous_n = |\rou{\ef_n}|$ and $\halfst_n = \sum_{\sigma \in H_n} \zeta_{\ef_n/\Q,S}(0,\sigma^{-1}) \sigma \in \Q[H_n]$, a sort of ``half Stickelberger element'' obtained by keeping only those terms corresponding to elements in the index two subgroup $H_n$ of $G_n$. (Note that $\rous_n \halfst_n \in \Z[H_n]$.) Comparing (\ref{real j desc}) and (\ref{lifted j desc}), we see without too much difficulty that

\begin{prop} \label{nat irrats example}
The isomorphism $\Phi_n : \Q[H_n] \ra \Q[G_n^+]$ identifies $\iJ{\ef_n/\gf,S_\gf}$ with $2\Phi_n(\halfst_n) \iJ{\ef_n^+/\Q,S}$.
\end{prop}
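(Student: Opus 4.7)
The plan is to compare the explicit descriptions \eqref{real j desc} and \eqref{lifted j desc} directly. Substituting them into the claimed equality and applying $\Phi_n^{-1}$, the assertion reduces to the equality of $\Z[H_n]$-fractional ideals
\[
\ann_{\Z[H_n]}\!\bigl(\oh_{\ef_n,S}\st / \E_n\bigr) \;=\; \rous_n\halfst_n \cdot \ann_{\Z[H_n]}\!\bigl(\oh_{\ef_n^+,S}\st / \E_n^+\bigr)
\]
inside $\Q[H_n]$, where $\oh_{\ef_n^+,S}\st/\E_n^+$ carries the $H_n$-action transported through $\Phi_n$ (equivalently, restricted from its natural $G_n$-action). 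The element $\rous_n\halfst_n$ lies in $\Z[H_n]$ and is a unit in $\Q[H_n]$: for any $\chi \in \chgrp{H_n}$, extended oddly to $\tilde\chi$ on $G_n$, one has $\chi(\halfst_n) = \tfrac12\tilde\chi(\stick_{\ef_n/\Q,S})$, which is $\tfrac12$ times the nonvanishing value $L_S(0,\tilde\chi^{-1})$ of an odd Artin $L$-function.

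The algebraic core is the identity $\stick_{\ef_n/\Q,S} = (1-c)\halfst_n$ in $\Q[G_n]$, which is immediate from $\stick_{\ef_n/\Q,S}$ lying in the minus-part for complex conjugation and rearranges to $\rous_n\halfst_n(1+c) = 2\rous_n\halfst_n - \rous_n\stick_{\ef_n/\Q,S}$. Applied multiplicatively to $1-\zeta_{\prm^{n+1}}$, it yields
\[
\bigl((1-\zeta_{\prm^{n+1}})(1-\zeta_{\prm^{n+1}}^{-1})\bigr)^{\rous_n\halfst_n} \;=\; \bigl((1-\zeta_{\prm^{n+1}})^{\rous_n\halfst_n}\bigr)^{2} \cdot (1-\zeta_{\prm^{n+1}})^{-\rous_n\stick_{\ef_n/\Q,S}}.
\]
The first factor on the right is the square of the defining generator of $\E_n$. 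Writing $w = (1-\zeta_{\prm^{n+1}})^{\rous_n\halfst_n}$, a direct computation using $1-\zeta_{\prm^{n+1}}^{-1} = -\zeta_{\prm^{n+1}}^{-1}(1-\zeta_{\prm^{n+1}})$ shows $c(w) = \pm\zeta_{\prm^{n+1}}^{-\rous_n\halfst_n} \cdot w$, so that $w^{1-c} = (1-\zeta_{\prm^{n+1}})^{\rous_n\stick_{\ef_n/\Q,S}} = \pm\zeta_{\prm^{n+1}}^{\rous_n\halfst_n}$ is a root of unity, and hence lies in $\E_n$ (absorbing the $\pm$ by inverting $2$). Consequently raising to $\rous_n\halfst_n$ sends the generators of $\E_n^+$ into $\E_n$ and induces a well-defined $\Z[H_n]$-module map $\oh_{\ef_n^+,S}\st/\E_n^+ \to \oh_{\ef_n,S}\st/\E_n$.

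Finally, I would establish both inclusions. For $\supseteq$: let $y \in \ann_{\Z[H_n]}(\oh_{\ef_n^+,S}\st/\E_n^+)$ and $u \in \oh_{\ef_n,S}\st$. Decompose $u^{2} = u^{1+c}\cdot u^{1-c}$; then $u^{1+c} \in \oh_{\ef_n^+,S}\st$ gives $(u^{1+c})^{\rous_n\halfst_n y} \in \E_n$ by the preceding paragraph, while $u^{1-c} \in \mu_{\ef_n}$ (by Kronecker's theorem) gives $(u^{1-c})^{\rous_n\halfst_n y} \in \mu_{\ef_n}$, hence in $\E_n$ after inverting $2$. For $\subseteq$: given $x \in \ann_{\Z[H_n]}(\oh_{\ef_n,S}\st/\E_n)$, set $y = x/(\rous_n\halfst_n) \in \Q[H_n]$ (allowed since $\rous_n\halfst_n$ is a unit) and restrict $x$ to $\oh_{\ef_n^+,S}\st \hookrightarrow \oh_{\ef_n,S}\st$; the $c$-fixed elements of $\E_n$ are, by the computation of $c(w)$ above, exactly the $\Z[H_n][1/2]$-module generated by $w^{(1+c)/2} = ((1-\zeta_{\prm^{n+1}})(1-\zeta_{\prm^{n+1}}^{-1}))^{\rous_n\halfst_n/2}$ modulo $\{\pm 1\}$, so that $\E_n \cap \oh_{\ef_n^+,S}\st$ is (after inverting $2$) precisely $\rous_n\halfst_n \cdot \E_n^+$; this forces $y$ to be integral and to annihilate $\oh_{\ef_n^+,S}\st/\E_n^+$. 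The main obstacle is this last step, where one must cleanly separate the cyclotomic-unit and root-of-unity contributions to $\E_n$; all residual $2$-power ambiguity, including that coming from the Hasse unit index $[\oh_{\ef_n,S}\st : \mu_{\ef_n}\oh_{\ef_n^+,S}\st]$, is absorbed into the $\Z[1/2]$-coefficients in the definition of $\iJ{\cdot}$.
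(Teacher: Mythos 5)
Your proof takes a genuinely different route from the paper's. The paper derives the Proposition from Lemma \ref{base change}, which packages the naturality of $\VR{f}$ under the quotient and corestriction maps of \S\ref{2.3} and \S\ref{2.9} into the relation $\Phi(\VR{f}_{E/F}) = \VR{f'}_{L/K}\Phi(\BC{f})$, and then identifies the base-change factor $\BC{f}$ with $\inv(2\halfst_n)^{-1}$. You instead compare the explicit module descriptions (\ref{real j desc}) and (\ref{lifted j desc}) directly. Both arguments hinge on the same identity $(1-c)\halfst_n = \stick_{\ef_n/\Q,S}$, but the paper uses it to evaluate odd $L$-values inside $\VR{f}$, while you use it multiplicatively on $1-\zeta_{\prm^{n+1}}$ to relate the $\Z[H_n]$-modules $\E_n$ and $\E_n^+$. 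Your route is more concrete and self-contained, avoiding the general naturality framework; the paper's is cleaner at the cost of leaving the comparison of the $\I_f$-parts implicit.

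The $\supseteq$ inclusion is solid (the decomposition $u^2 = u^{1+c}u^{1-c}$ with $u^{1-c} \in \rou{\ef_n}$ by Kronecker, combined with $((1-\zeta_{\prm^{n+1}})(1-\zeta_{\prm^{n+1}}^{-1}))^{\rous_n\halfst_n} = w^2 \cdot (\textrm{root of unity})$, is correct). But the $\subseteq$ inclusion is asserted rather than proven. You reduce it to the claim that $\E_n \cap \oh_{\ef_n^+,S}\st$ is, after inverting $2$, the image of $\E_n^+$ under $u \mapsto u^{\rous_n\halfst_n}$, and you flag this as the ``main obstacle'' --- rightly so. The sketch offered via ``$w^{(1+c)/2}$'' is not well-posed: $(1+c)/2 \notin \Z[G_n]$ and $\rous_n\halfst_n/2 \notin \Z[H_n]$ in general (the partial zeta values have denominator $\rous_n$, not $\rous_n/2$), so neither exponent acts integrally on units. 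What is actually needed is an explicit computation: a $c$-fixed element $\zeta_0 w^b \in \E_n$ with $\zeta_0 \in \rou{\ef_n}$, $b \in \Z[H_n]$, must satisfy $\zeta_0^2 = \pm\zeta_{\prm^{n+1}}^{-\rous_n\halfst_n b}$ by your formula for $c(w)$, whence $(\zeta_0 w^b)^2 = \pm\eta^b$ where $\eta = ((1-\zeta_{\prm^{n+1}})(1-\zeta_{\prm^{n+1}}^{-1}))^{\rous_n\halfst_n}$; inverting $2$ then places it in the required span. You should also address why $y = x\cdot(\rous_n\halfst_n)^{-1}$ lies in $\Z[1/2][H_n]$ and not merely $\Q[H_n]$ --- $\rous_n\halfst_n$ is a unit rationally but not $\prm$-adically. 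Supplying these details would turn your proposal into a complete and more elementary alternative to the paper's argument.
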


We now explain the above phenomena by proving some general relationships between the ${\cal J}_{\ef/\gf}^\lint$ under natural changes of extension.

\subsection{Behaviour under quotient maps ${\rm Gal}(L/F) \ra  {\rm Gal}(K/F)$} \label{2.3}

Suppose that $F \subseteq K \subseteq L$ is a tower of number fields with $L/F$ abelian. The inclusion of $K$ into $L$ induces a homomorphism
\[   K_{1-2r}({\cal O}_{K}[t^{\pm 1}])  \ra    K_{1-2r}({\cal O}_{L}[t^{\pm 1}])  .   \]
When $r=0$
\[   \frac{ K_{1}({\cal O}_{K}[t^{\pm 1}])}{{\rm Torsion}}   \cong  {\cal O}_{K}\st/( \mu(K))  \oplus  {\mathbb Z} \langle t \rangle  \]
maps injectively to the Galois invariants of $ {\cal O}_{L}\st/( \mu(L))  \oplus {\mathbb Z} \langle t \rangle $ sending $t$ to itself.  For strictly negative $r$,
\[     \frac{ K_{1-2r}({\cal O}_{K}[t^{\pm 1}])}{{\rm Torsion}}   \cong \frac{ K_{1-2r}({\cal O}_{K})}{{\rm Torsion}}   \]
embeds into the ${\rm Gal}(L/K)$-invariants of $  \frac{ K_{1-2r}({\cal O}_{L}[t^{\pm 1}])}{{\rm Torsion}}  $. There is a homomorphism $Y_{r}(K)  \ra  Y_{r}(L)$ which sends $n_{\sigma} \cdot \sigma$ to $n_{\sigma} \cdot ( \sum_{ (   \sigma'  \ | \ F) = \sigma}  \  \sigma' )$ which is an isomorphism onto the ${\rm Gal}(L/K)$-invariants $Y_{r}(L)^{{\rm Gal}(L/K)}$. For $ r = 0, -1, -2, -3, \ldots$ there is a commutative diagram of regulators in \S\ref{1.2}
\[ \xymatrix{
K_{1-2r}(\oh_K[t^{\pm 1}]) \teno{\Z} \R \ar[r]^-{R_K^r} \ar[d] & Y_r(K)^+ \teno{\Z} \R \ar[d] \\
K_{1-2r}(\oh_L[t^{\pm 1}]) \teno{\Z} \R \ar[r]^-{R_L^r} & Y_r(L)^+ \teno{\Z} \R
} \]
We may choose $f_{K}^{r}$ and $f_{L}^{r}$ as in \S\ref{1.2} to make the corresponding diagram of ${\mathbb Q}$-vector spaces commute
\begin{equation} \label{f chosen to commute}
\xymatrix{
K_{1-2r}(\oh_K[t^{\pm 1}]) \teno{\Z} \Q \ar[r]^-{f_K^r} \ar[d] & Y_r(K)^+ \teno{\Z} \Q \ar[d] \\
K_{1-2r}(\oh_L[t^{\pm 1}]) \teno{\Z} \Q \ar[r]^-{f_L^r} & Y_r(L)^+ \teno{\Z} \Q
}
\end{equation}

Let $V$ be a one-dimensional complex representation of ${\rm Gal}(K/F)$ and let $W = \Inf_{{\rm Gal}(K/F)}^{{\rm Gal}(L/F)}(V)$ denote the inflation of $V$. Then
\[  \begin{array}{l}
{\rm Hom}_{{\rm Gal}(L/F)}(W^{\vee} ,  Y_{r}(L)^{+}  \otimes {\mathbb C})  \\
=  {\rm Hom}_{{\rm Gal}(L/F)}(W^{\vee} ,  (  Y_{r}(L)^{{\rm Gal}(L/K)})^{+}  \otimes {\mathbb C})  \\
=  {\rm Hom}_{{\rm Gal}(K/F)}(V^{\vee} ,  Y_{r}(K)^{+}  \otimes {\mathbb C})
\end{array}  \]
and these isomorphisms transport $( R_{L}^{r} \cdot (f_{L}^{r})^{-1})_{*}$ into $( R_{K}^{r} \cdot (f_{K}^{r})^{-1})_{*}$ by virtue of the above commutative diagrams. Furthermore, since the Artin $L$-function is invariant under inflation, $L_{F}^{*}(r, V) = L_{F}^{*}(r , W)$. On the other hand, the inflation homomorphism
\[   \Inf_{{\rm Gal}(K/F)}^{{\rm Gal}(L/F)} :  R( {\rm Gal}(K/F) )  \ra  R({\rm Gal}(L/F))    \]
induces the canonical quotient map
\[   \pi_{L/K} :  {\mathbb Q}[{\rm Gal}(L/F)]\st  \ra     {\mathbb Q}[ {\rm Gal}(K/F) ]\st    \]
via the isomorphism of Proposition \ref{1.5}. Hence
\[    \pi_{L/K} (    {\cal R}_{f_{L}^{r}}  )  =   {\cal R}_{f_{K}^{r}}  . \]

Let $ \alpha \in  {\rm End}_{{\mathbb Q}[{\rm Gal}(L/F)]}( Y_{r}(L)^{+} \otimes {\mathbb Q} )$ satisfy the integrality condition of \S\ref{2.1}
\[  \alpha \cdot   f_{L}^{r}( K_{1-2r}({\cal O}_{L}[t^{\pm 1}]))      \subseteq Y_{r}(L) .  \]
Extend this by the identity on the $(-1)$-eigenspace of complex conjugation $Y_{r}(L)^{-}   \otimes {\mathbb Q} $ to give
\[    \alpha  \oplus   1  \in   {\rm End}_{{\mathbb Q}[{\rm Gal}(L/F)]}( Y_{r}(L) \otimes {\mathbb Q} )   .   \]
The endomorphism $\alpha$ commutes with the action by ${\rm Gal}(L/K)$ so there is $\hat{\alpha}  \in   {\rm End}_{{\mathbb Q}[{\rm Gal}(K/F)]}( Y_{r}(K)^{+} \otimes {\mathbb Q} )$ making the following diagram commute
\[ \xymatrix{
Y_r(K)^+ \teno{\Z} \Q \ar[r]^{\hat{\alpha}} \ar[d] & Y_r(K)^+ \teno{\Z} \Q \ar[d] \\
Y_r(L)^+ \teno{\Z} \Q \ar[r]^\alpha & Y_r(L)^+ \teno{\Z} \Q .
} \]
Therefore $\hat{\alpha}$ satisfies the integrality condition of \S\ref{2.1}
\[  \hat{\alpha} \cdot   f_{K}^{r}( K_{1-2r}({\cal O}_{K}[t^{\pm 1}]))      \subseteq Y_{r}(K) .  \]
We may choose a ${\mathbb Z}[1/2][{\rm Gal}(K/F)]$ basis for $Y_{r}(K) \otimes {\mathbb Z}[1/2]$ consisting of embeddings $\sigma_{i} : K \ra  {\mathbb C}$ for $1 \leq i \leq m$. Let $ \sigma'_{i}$ be an embedding of $L$ which extends $\sigma_{i}$ for $1 \leq i \leq m$. Then a ${\mathbb Z}[1/2][{\rm Gal}(L/F)]$ basis for $Y_{r}(L) \otimes {\mathbb Z}[1/2]$ is given by $\{   \sigma'_{1} ,  \sigma'_{2} , \ldots ,  \sigma'_{m} \}$. The embedding of $Y_{r}(K) $ into $Y_{r}(L)$ is given by  $\sigma_{i}   \mapsto  \sum_{g \in {\rm Gal}(L/K)}  \  g(\sigma'_{i} ) $ which implies that the $m \times m$ matrix for $\hat{\alpha}$ with respect to the ${\mathbb Z}[1/2][{\rm Gal}(K/F)]$ basis of $\sigma_{i}$'s is the image of the  $m \times m$ matrix for $\alpha$ with respect to the ${\mathbb Z}[1/2][{\rm Gal}(L/F)]$ basis of $\sigma'_{i}$'s under the canonical surjection
\[    {\mathbb Q}[{\rm Gal}(L/F)]  \ra    {\mathbb Q}[{\rm Gal}(K/F)]   .  \]

This discussion has established the following result.

\begin{prop} \label{2.4}
Suppose that $F \subseteq K \subseteq L$ is a tower of number fields with $L/F$ abelian. Then, in the notation of \S\ref{2.1},  the canonical surjection
\[    \pi_{L/K} :  {\mathbb Q}[{\rm Gal}(L/F)]  \ra    {\mathbb Q}[{\rm Gal}(K/F)]     \]
satisfies
\[    \pi_{L/K}(  {\cal J}_{L/F}^{r} )  \subseteq  {\cal J}_{K/F}^{r} .  \]
\end{prop}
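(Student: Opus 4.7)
The plan is to assemble the ingredients already worked out in the preamble into a clean two-part argument, exploiting the factorisation $\mathcal{J}_{L/F}^r = \mathcal{I}_{f_L^r}\cdot\inv(\mathcal{R}_{f_L^r}^{-1})$. Since $\pi_{L/K}$ is a ring homomorphism which visibly commutes with the ``send $g$ to $g^{-1}$'' involution $\inv$, it suffices to prove the two inclusions
\[
\pi_{L/K}(\mathcal{I}_{f_L^r}) \subseteq \mathcal{I}_{f_K^r}, \qquad \pi_{L/K}(\mathcal{R}_{f_L^r}) = \mathcal{R}_{f_K^r},
\]
and then multiply. By the preceding proposition, $\mathcal{J}_{L/F}^r$ and $\mathcal{J}_{K/F}^r$ do not depend on the choices of $f_L^r$ and $f_K^r$, so I may fix at the outset any compatible pair making the diagram \eqref{f chosen to commute} commute; the existence of such a pair is the content of the remark preceding that diagram.

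For the regulator identity, I would feed the inflation construction into Proposition \ref{1.5}. For each one-dimensional character $V$ of $\gal{K/F}$ with inflation $W = \Inf(V)$, the computation in \S\ref{2.3} identifying $\Hom_{\gal{L/F}}(W^\vee, Y_r(L)^+\otimes\C)$ with $\Hom_{\gal{K/F}}(V^\vee, Y_r(K)^+\otimes\C)$, combined with the commutative diagram for the regulators and the inflation invariance $L_F^*(r,V) = L_F^*(r,W)$, yields $\mathcal{R}_{f_L^r}(W) = \mathcal{R}_{f_K^r}(V)$. Translating through $\lambda_G$ of Proposition \ref{1.5}, this is exactly the statement $\pi_{L/K}(\mathcal{R}_{f_L^r}) = \mathcal{R}_{f_K^r}$.

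For the integrality part, I would take an arbitrary generator $\det_{\Q[\gal{L/F}]}(\alpha\oplus 1)$ of $\mathcal{I}_{f_L^r}$ with $\alpha\cdot f_L^r(K_{1-2r}(\oh_L[t^{\pm 1}]))\subseteq Y_r(L)$. The $\gal{L/K}$-equivariance of $\alpha$ restricts it, via the inclusion $Y_r(K)^+\hookrightarrow Y_r(L)^{+,\gal{L/K}}$, to an endomorphism $\hat{\alpha}$ of $Y_r(K)^+\otimes\Q$; compatibility of $f_K^r$ and $f_L^r$ forces $\hat{\alpha}$ to satisfy its own integrality condition. The matrix computation using the embedding bases $\{\sigma_i\}$ of $K$ lifted to $\{\sigma_i'\}$ of $L$ — already done in \S\ref{2.3} — shows that the matrix of $\hat\alpha$ is literally the $\pi_{L/K}$-image of the matrix of $\alpha$, so
\[
\pi_{L/K}\bigl(\Det_{\Q[\gal{L/F}]}(\alpha\oplus 1)\bigr) = \Det_{\Q[\gal{K/F}]}(\hat\alpha\oplus 1) \in \mathcal{I}_{f_K^r}.
\]
Multiplying the two parts together gives $\pi_{L/K}(\mathcal{J}_{L/F}^r)\subseteq \mathcal{J}_{K/F}^r$.

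The genuine obstacle was already dispatched in the preparatory discussion; what remains is bookkeeping, the most delicate point being to justify that the ``extend by identity on the $(-1)$-eigenspace'' recipe commutes with the restriction $\alpha\mapsto\hat\alpha$. This is not automatic in principle, but it is immediate here because complex conjugation acts on $Y_r(K)$ and on $Y_r(L)$ compatibly with the inclusion $Y_r(K)\hookrightarrow Y_r(L)^{\gal{L/K}}$, so the $\pm$ decomposition is preserved; thus extending $\hat\alpha$ by $1$ on $Y_r(K)^-\otimes\Q$ corresponds under $\pi_{L/K}$ to extending $\alpha$ by $1$ on $Y_r(L)^-\otimes\Q$, and determinants are computed compatibly.
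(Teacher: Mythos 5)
Your argument is correct and follows essentially the same route as the paper: choose compatible $f_K^r,\, f_L^r$, establish $\pi_{L/K}(\mathcal{R}_{f_L^r}) = \mathcal{R}_{f_K^r}$ via inflation invariance of $L$-functions and the $\Hom$-space identification, restrict the integral $\alpha$ to $\hat\alpha$ and read off $\pi_{L/K}(\Det_{\Q[\gal{L/F}]}(\alpha\oplus 1)) = \Det_{\Q[\gal{K/F}]}(\hat\alpha\oplus 1)$ from the embedding basis. Your explicit justification of why extension by $1$ on the $(-1)$-eigenspace is compatible with the restriction $\alpha\mapsto\hat\alpha$ is a point the paper leaves implicit (it is absorbed into the choice of $\Z[1/2]$-basis of the full $Y_r$), and is a worthwhile remark.
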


Proposition \ref{2.4} explains the existence of the maps in (\ref{quot examples}).

\subsection{Behaviour under inclusion maps ${\rm Gal}(L/K) \ra  {\rm Gal}(L/F)$} \label{2.5}

As in \S\ref{2.4}, suppose that $F \subseteq K \subseteq L$ is a tower of number fields with $L/F$ abelian. The inclusion of ${\rm Gal}(L/K)$ into ${\rm Gal}(L/F)$ induces an inclusion of group-rings ${\mathbb Q}[{\rm Gal}(L/K)]$ into  ${\mathbb Q}[{\rm Gal}(L/F)]$. In terms of the isomorphism of Proposition \ref{1.5}, as is easily seen by the formula,  this homomorphism is induced by the restriction of representations
\[   \Res_{{\rm Gal}(L/K)}^{{\rm Gal}(L/F)} :   R({\rm Gal}(L/F))  \ra  R({\rm Gal}(L/K)) . \]

If $V$ is a complex representation of ${\rm Gal}(L/F)$ then
\begin{eqnarray*}
{\cal R}_{f_{L }^{r}}(  \Res_{{\rm Gal}(L/K)}^{{\rm Gal}(L/F)}(V)) &=& \frac{R(  \Res_{{\rm Gal}(L/K)}^{{\rm Gal}(L/F)}(V),f_{L}^{r})}{L_{K}^{*}( r ,  \Res_{{\rm Gal}(L/K)}^{{\rm Gal}(L/F)}( V)) }   \\
&=&   \frac{R(  \Res_{{\rm Gal}(L/K)}^{{\rm Gal}(L/F)}(V),f_{L}^{r})}{L_{F}^{*}( r , \Ind_{{\rm Gal}(L/K)}^{{\rm Gal}(L/F)}(\Res_{{\rm Gal}(L/K)}^{{\rm Gal}(L/F)}( V))) }   \\
&=&    \frac{R(  \Res_{{\rm Gal}(L/K)}^{{\rm Gal}(L/F)}(V),f_{L}^{r})}{L_{F}^{*}( r ,  V \otimes \Ind_{{\rm Gal}(L/K)}^{{\rm Gal}(L/F)}(1)) }  .
\end{eqnarray*}
If $W_{i}  \in  \chgrp{\rm Gal}(L/F)$ for $1 \leq i \leq [K : F]$  is the set of  one-dimensional representations which restrict to the trivial representation on ${\rm Gal}(L/K)$ then \linebreak $\Ind_{{\rm Gal}(L/K)}^{{\rm Gal}(L/F)}(1))  =  \oplus_{i} \  W_{i}$. By Frobenius reciprocity
\[   \begin{array}{l}
{\rm Hom}_{{\rm Gal}(L/K)}(   \Res_{{\rm Gal}(L/K)}^{{\rm Gal}(L/F)}(V)^{\vee} ,  Y_{r}(L )^{+} \otimes {\mathbb C}) ) \\
 =
 {\rm Hom}_{{\rm Gal}(L/F)}(  \oplus_{i} \  (V \otimes  W_{i} )^{\vee} ,  Y_{r}(L )^{+} \otimes {\mathbb C}) )
 \end{array}   \]
 so that
 \[ R(  \Res_{{\rm Gal}(L/K)}^{{\rm Gal}(L/F)}(V),f_{L}^{r})  =  \prod_{i}  \  R(  V  \otimes W_{i}  ,   f_{L}^{r})  \]
 and
\[   {\cal R}_{f_{L }^{r}}(  \Res_{{\rm Gal}(L/K)}^{{\rm Gal}(L/F)}(V)) = \prod_{i}  \    {\cal R}_{f_{L }^{r}}(  V  \otimes  W_{i}  ) .  \]

Let $H \subseteq G$ be finite groups with $G$ abelian. It will suffice to consider the case in which $G/H$ is cyclic of order $n$ generated by $gH$.
Let $W \otimes {\mathbb Q}$ be a free ${\mathbb Q}[G]$-module with basis $v_{1}, \ldots , v_{r}$. Then $W \otimes {\mathbb Q}$ is a free ${\mathbb Q}[H]$-module with basis $\{  g^{a}v_{i}  \  |  \  0 \leq a \leq n-1, \  1 \leq i \leq r \}$. Set ${\cal S} =  \{ 0, \ldots , n-1 \} \times \{ 1 , \ldots , r \}$; then for $\underline{u} = (a , i) \in {\cal S}$, we set $e_{\underline{u}} =  g^{a}v_{i}$. If $\tilde{\alpha}  \in     {\rm End}_{{\mathbb Q}[H]}( W   \otimes {\mathbb Q} )$ we may write
\[    \tilde{\alpha}(e_{\underline{w}}  )  =    \sum_{\underline{u}}  \  A_{ \underline{u}. \underline{w}}  e_{\underline{u}}    \]
so that $A$ is an $nr \times nr$ matrix with entries in ${\mathbb Q}[H]$.

Now consider the induced ${\mathbb Q}[G]$-module $\Ind_{H}^{G}( W \otimes   {\mathbb Q})$, which is a free  ${\mathbb Q}[G]$-module on the basis $\{  1  \otimes_{H}  e_{\underline{u}}  \  |  \ \underline{u} \in {\cal S} \}$. Hence the $nr \times nr$ matrix, with entries in ${\mathbb Q}[G]$, for $1 \otimes_{H}  \tilde{\alpha}$ with respect to this basis is the image of $A$ under the canonical inclusion of $\phi_{H,G} :  {\mathbb Q}[H]   \ra  {\mathbb Q}[G]$. In particular
\[   \phi_{H,G}(  \Det_{ {\mathbb Q}[H] }( \tilde{\alpha})) =  \Det_{  {\mathbb Q}[G]}( {\mathbb Q}[G] \otimes_{ {\mathbb Q}[H]   }  \tilde{\alpha})   \]
and, by induction on $[G:H]$, this relation is true for an arbitrary inclusion $H \subseteq G$ of finite abelian groups.

This discussion yields the following result:

\begin{prop} \label{2.6}
Suppose that $F \subseteq K \subseteq L$ is a tower of number fields with $L/F$ abelian. Then, in the notation of \S\ref{2.1},  the canonical inclusion
\[    \phi_{K/F} :  {\mathbb Q}[{\rm Gal}(L/K)]  \ra    {\mathbb Q}[{\rm Gal}(L/F)]     \]
maps $ {\cal J}_{L/K}^{r}$ onto the $    {\mathbb Z}[1/2][{\rm Gal}(L/K)]$-submodule
\[      {\mathbb Z}[1/2][{\rm Gal}(L/K)]  \langle \Det_{  {\mathbb Q}[{\rm Gal}(L/F)]}( {\mathbb Q}[{\rm Gal}(L/F)] \otimes_{ {\mathbb Q}[{\rm Gal}(L/K)]   }  (\alpha  \oplus 1) ) \inv(\hat{{\cal R}}_{f_{L}^{r} })^{-1}    \rangle  .  \]
Here, in terms of Proposition \ref{1.5},  $ \hat{{\cal R}}_{f_{L}^{r} } \in {\mathbb Q}[ {\rm Gal}(L/F)]\st$ is given by
\[   \hat{{\cal R}}_{f_{L}^{r} }(V)  =  {\cal R}_{f_{L}^{r} }( V  \otimes  \Ind_{{\rm Gal}(L/K)}^{{\rm Gal}(L/F)}(1)) \]
and $  \alpha \in  {\rm End}_{{\mathbb Q}[{\rm Gal}(L/K)]}( Y_{r}(L)^{+} \otimes {\mathbb Q} )$ runs through endomorphisms satisfying the integrality condition of \S\ref{2.1}.
\end{prop}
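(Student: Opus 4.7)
The proof plan is to apply $\phi_{K/F}$ directly to a generating element of $\mathcal{J}_{L/K}^r$ and to translate the two key facts already established in the discussion immediately preceding the proposition into statements about the image. Recall from \S \ref{2.1} that $\mathcal{J}_{L/K}^r = \mathcal{I}_{f_L^r} \cdot \inv(\mathcal{R}_{f_L^r}^{-1})$, where $\mathcal{I}_{f_L^r}$ is the $\mathbb{Z}[1/2][\mathrm{Gal}(L/K)]$-submodule of $\mathbb{Q}[\mathrm{Gal}(L/K)]$ generated by the determinants $\mathrm{Det}_{\mathbb{Q}[\mathrm{Gal}(L/K)]}(\alpha \oplus 1)$ for $\alpha$ satisfying the integrality condition. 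Since $\phi_{K/F}$ is a ring homomorphism, the image $\phi_{K/F}(\mathcal{J}_{L/K}^r)$ is the $\mathbb{Z}[1/2][\mathrm{Gal}(L/K)]$-submodule generated by the images of these generators multiplied by $\phi_{K/F}(\inv(\mathcal{R}_{f_L^r}^{-1}))$, so everything reduces to computing these two factors.

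For the determinant factor, we invoke the identity
\[
\phi_{K/F}(\mathrm{Det}_{\mathbb{Q}[\mathrm{Gal}(L/K)]}(\alpha \oplus 1)) = \mathrm{Det}_{\mathbb{Q}[\mathrm{Gal}(L/F)]}(\mathbb{Q}[\mathrm{Gal}(L/F)] \otimes_{\mathbb{Q}[\mathrm{Gal}(L/K)]} (\alpha \oplus 1)),
\]
which was proved in the paragraph ending \S \ref{2.5} by induction on $[\mathrm{Gal}(L/F):\mathrm{Gal}(L/K)]$ after reducing to the cyclic case and writing matrices in an explicit basis of the induced module. For the regulator factor, recall from the opening lines of \S \ref{2.5} that under the isomorphism of Proposition \ref{1.5}, the inclusion $\phi_{K/F}$ corresponds to the restriction map $\mathrm{Res} : R(\mathrm{Gal}(L/F)) \to R(\mathrm{Gal}(L/K))$. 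Hence $\phi_{K/F}(\mathcal{R}_{f_L^r})$, viewed as a function on $R(\mathrm{Gal}(L/F))$, sends $V$ to $\mathcal{R}_{f_L^r}(\mathrm{Res}(V))$. The Frobenius-reciprocity calculation in \S \ref{2.5} then identifies this with $\prod_i \mathcal{R}_{f_L^r}(V \otimes W_i) = \mathcal{R}_{f_L^r}(V \otimes \mathrm{Ind}_{\mathrm{Gal}(L/K)}^{\mathrm{Gal}(L/F)}(1)) = \hat{\mathcal{R}}_{f_L^r}(V)$. Since $\phi_{K/F}$ is a ring inclusion it commutes with $\inv$ and with inversion, so $\phi_{K/F}(\inv(\mathcal{R}_{f_L^r}^{-1})) = \inv(\hat{\mathcal{R}}_{f_L^r})^{-1}$. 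Combining these two facts gives exactly the formula in the statement.

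The main subtlety to address is simply bookkeeping: one must verify that the determinant identity and the multiplicativity of $\mathcal{R}$ on direct sums work cleanly when transported through the ring isomorphism $\lambda_G$ of Proposition \ref{1.5}, and that the integrality restriction on $\alpha$ is preserved in the sense that the $\alpha$'s parameterizing the generators on the right-hand side are still exactly the $\mathbb{Q}[\mathrm{Gal}(L/K)]$-endomorphisms satisfying $\alpha \cdot f_L^r(K_{1-2r}(\oh_L[t^{\pm 1}])) \subseteq Y_r(L)$ (no additional $\mathrm{Gal}(L/F)$-integrality is imposed, because the induced endomorphism of $\mathbb{Q}[\mathrm{Gal}(L/F)] \otimes_{\mathbb{Q}[\mathrm{Gal}(L/K)]} Y_r(L)^+ \otimes \mathbb{Q}$ is only used to take a determinant). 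No novel argument is needed beyond assembling the pieces already in place.
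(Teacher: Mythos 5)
Your proposal is correct and takes essentially the same route as the paper: Proposition \ref{2.6} is stated as the summary of the discussion in \S\ref{2.5}, and your argument correctly assembles the two ingredients established there (the determinant-transport identity $\phi_{H,G}(\Det_{\mathbb{Q}[H]}(\tilde\alpha)) = \Det_{\mathbb{Q}[G]}(\mathbb{Q}[G]\otimes_{\mathbb{Q}[H]}\tilde\alpha)$ and the identification of $\phi_{K/F}(\mathcal{R}_{f_L^r})$ with $\hat{\mathcal{R}}_{f_L^r}$ via restriction, the Artin formalism for $L$-functions, and Frobenius reciprocity), together with the observation that $\phi_{K/F}$ is a ring homomorphism commuting with $\inv$.
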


\subsection{Behaviour under fixed-point maps} \label{2.7}

As in \S\ref{2.4}, suppose that $F \subseteq K \subseteq L$ is a tower of number fields with $L/F$ abelian. Let $e_{L/K} =    [L:K]^{-1}( \sum_{y \in {\rm Gal}(L/K)}  \  y ) $ denote the idempotent associated with the subgroup ${\rm Gal}(L/K) $.
 There is a homomorphism of unital rings of the form
 \[   \lambda_{K/F} : {\mathbb Q}[{\rm Gal}(K/F)]     \ra   {\mathbb Q}[{\rm Gal}(L/F)]   \]
 given,  for $z \in {\rm Gal}(L/F)$, by the formula
\[   \lambda_{K/F}(  z {\rm Gal}(L/K)) =  (1 - e_{L/K}) + z  \cdot  e_{L/K}    \in  {\mathbb Q}[{\rm Gal}(L/F)] . \]
From Proposition \ref{1.5} it is easy to see that in terms of group characters
\[    {\rm Map}( \chgrp{{\rm Gal}}(K/F) ,  \overline{{\mathbb Q}})   \ra   {\rm Map}( \chgrp{{\rm Gal}}(L/F) ,  \overline{{\mathbb Q}})    \]
this sends a function $h$ on $ \chgrp{{\rm Gal}}(K/F)$ to the function $h'$ given by
\[    h'(\chi)  =  \left\{
\begin{array}{ll}
h(\chi_{1})  &  {\rm if}  \   \Inf_{{\rm Gal}(K/F)}^{{\rm Gal}(L/F)}(\chi_{1}) =  \chi  ,  \\
1  & {\rm otherwise}.
\end{array}
\right. \]

Sending a complex representation $V$ of ${\rm Gal}(L/F)$ to its ${\rm Gal}(L/K)$-fixed points $V^{{\rm Gal}(L/K)}$ gives a homomorphism
\[ \mathrm{Fix} : R({\rm Gal}(L/F)) \ra  R({\rm Gal}(K/F)) .   \]
In terms of one-dimensional respresentations (i.e. characters) the above condition $  \Inf_{{\rm Gal}(K/F)}^{{\rm Gal}(L/F)}(\chi_{1}) =  \chi $ is equivalent to $\mathrm{Fix}(\chi) = \chi_{1}$.

Let $V$ be a one-dimensional complex representation of ${\rm Gal}(L/F)$ fixed by ${\rm Gal}(L/K)$. Then we have isomorphisms of the form
\[  \begin{array}{l}
{\rm Hom}_{{\rm Gal}(L/F)}(( V^{{\rm Gal}(L/K)})^{\vee} ,  Y_{r}(L)^{+}  \otimes {\mathbb C})  \\
=  {\rm Hom}_{{\rm Gal}(K/F)}( V^{\vee} ,  (  Y_{r}(L)^{{\rm Gal}(L/K)})^{+}  \otimes {\mathbb C})  \\
=  {\rm Hom}_{{\rm Gal}(K/F)}(V^{\vee} ,  Y_{r}(K)^{+}  \otimes {\mathbb C})
\end{array}  \]
and, by invariance of $L$-functions under inflation, $L_{F}^{*}(r, V) =  L_{F}^{*}(r,  V^{{\rm Gal}(L/K)})$.
Therefore, by the discussion of \S\ref{2.3},
\[   {\cal R}_{f_{L }^{r}}(V)  =  {\cal R}_{f_{K }^{r}}( V^{{\rm Gal}(L/K)} )   .  \]
On the other hand, if $V^{{\rm Gal}(L/K)} = 0$ then $ {\cal R}_{f_{K }^{r}}( V^{{\rm Gal}(L/K)} ) = 1$ since both $L_{F}^{*}(r, 0 )$ and the determinant of the identity map of the trivial vector space are equal to one.
This establishes the formula
\[   \lambda_{K/F}(  {\cal R}_{f_{K }^{r}}) =   (1 - e_{L/K}) +     {\cal R}_{f_{L }^{r}} \cdot  e_{L/K}  . \]

Now consider an endomorphism
\[   \alpha \in {\rm End}_{  {\mathbb Q}[{\rm Gal}(K/F)]}( Y_{r}(K)^{+}  \otimes {\mathbb Q} )  \]
satisfying the integrality condition of \S\ref{2.1}
\[  \alpha f_{r , K}( K_{1-2r}( {\cal O}_{K}[t^{\pm 1}]) ) \subseteq  Y_{r}(K)^{+}  \cong  (Y_{r}(L)^{+})^{{\rm Gal}(L/K)}  .  \]
Let $v_{1}, v_{2}, \ldots , v_{d}$ be a ${\mathbb Z}[1/2][ {\rm Gal}(L/F) ]$-basis of $Y_{r}(L)[1/2]$
so that a \linebreak ${\mathbb Z}[1/2][ {\rm Gal}(K/F) ]$-basis of the subspace $(Y_{r}(L)^{+})^{{\rm Gal}(L/K)}[1/2] \cong Y_{r}(K)[1/2]$ is given by $  \{    (\sum_{y \in {\rm Gal}(L/K)}  \  y )v_{i}  \  |  \   1 \leq i \leq d \}     $. To construct the generators of ${\cal J}_{K/F}^{r} $, as in \S\ref{2.1}, we must calculate the determinant of $\alpha \oplus 1$ on $Y_{r}(K)^{+}  \otimes {\mathbb Q}  \oplus Y_{r}(K)^{-}  \otimes {\mathbb Q} =  Y_{r}(K)  \otimes {\mathbb Q}$ with respect to the basis  $  \{    (\sum_{y \in {\rm Gal}(L/K)}  \  y )v_{i}  \}$
and divide by  $ \inv(   {\cal R}_{f_{K}^{r}})$.

Let $\hat{\alpha} \in {\rm End}_{  {\mathbb Q}[{\rm Gal}(L/F)]}( Y_{r}(L)  \otimes {\mathbb Q} ) $ be given by $\alpha$ on $Y_{r}(L)^{{\rm Gal}(L/F)} \otimes {\mathbb Q}$ and the identity on $(1 - e_{L/K})Y_{r}(L)  \otimes {\mathbb Q}$. Hence $\hat{\alpha}$ satisfies the integrality condition
\[  \hat{\alpha} \cdot   f_{L}^{r}( K_{1-2r}({\cal O}_{L}[t^{\pm 1}]))^{{\rm Gal}(L/F)}      \subseteq Y_{r}(L)^{{\rm Gal}(L/F)} ,  \]
because, as in \S\ref{2.3}, $f_{K}^{r}$ may be assumed to extend to $f_{L}^{r}$.
Therefore
\[   e_{L/K}   \frac{{\rm det}(\hat{\alpha})}{ \inv(   {\cal R}_{f_{L}^{r}})} \in  e_{L/K}  {\cal J}_{L/F}^{r}
\subset {\mathbb Q}[ {\rm Gal}(L/F) ] .  \]
On the other hand it is clear that $\lambda_{K/F}( {\rm det}(\alpha \oplus 1)) =  {\rm det}(\hat{\alpha})$.

This discussion has established the following result.

\begin{prop} \label{2.8}
Suppose that $F \subseteq K \subseteq L$ is a tower of number fields with $L/F$ abelian and let
 \[   \lambda_{K/F} : {\mathbb Q}[{\rm Gal}(K/F)]     \ra   {\mathbb Q}[{\rm Gal}(L/F)]   \]
 denote the unital ring homomorphism of \S\ref{2.7}. Then
 \[ \lambda_{K/F}(  {\cal J}_{K/F}^{r}  )  \subseteq  (1- e_{L/K})  {\mathbb Q}[{\rm Gal}(L/F)] +  e_{L/K} {\cal J}_{L/F}^{r}   .   \]
\end{prop}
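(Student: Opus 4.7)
The plan is to verify the containment on the generators of $\mathcal{J}_{K/F}^r$, which over $\mathbb{Z}[1/2][\gal{K/F}]$ have the form $\det_{\mathbb{Q}[\gal{K/F}]}(\alpha \oplus 1)\inv(\mathcal{R}_{f_K^r}^{-1})$ with $\alpha$ admissible in the sense of \S\ref{2.1}; since $\lambda_{K/F}$ restricts to a map $\mathbb{Z}[1/2][\gal{K/F}] \to \mathbb{Z}[1/2][\gal{L/F}]$, handling one such generator will be enough. From the identities worked out in the paragraphs leading up to the statement we have $\lambda_{K/F}(\det(\alpha \oplus 1)) = \det(\hat{\alpha})$ and $\lambda_{K/F}(\mathcal{R}_{f_K^r}) = (1-e_{L/K}) + e_{L/K}\mathcal{R}_{f_L^r}$, and since $\inv$ fixes $e_{L/K}$ it commutes with $\lambda_{K/F}$. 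Inverting coordinate-wise in the orthogonal-idempotent decomposition yields
\[ \lambda_{K/F}\bigl(\det(\alpha \oplus 1)\inv(\mathcal{R}_{f_K^r}^{-1})\bigr) = (1-e_{L/K})\det(\hat{\alpha}) + e_{L/K}\det(\hat{\alpha})\inv(\mathcal{R}_{f_L^r}^{-1}), \]
whose first summand is already in $(1-e_{L/K})\mathbb{Q}[\gal{L/F}]$, matching one piece of the target.

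The difficulty lies in placing the second summand $e_{L/K}\det(\hat{\alpha})\inv(\mathcal{R}_{f_L^r}^{-1})$ inside $e_{L/K}\mathcal{J}_{L/F}^r$. The endomorphism $\hat{\alpha}$, acting as the identity on $(1-e_{L/K})(Y_r(L) \otimes \mathbb{Q})$, is not in general admissible for $\mathcal{J}_{L/F}^r$: it satisfies the integrality condition of \S\ref{2.1} only on the $\gal{L/K}$-invariant summand. To remedy this, I would introduce $\beta \in \End_{\mathbb{Q}[\gal{L/F}]}(Y_r(L)^+ \otimes \mathbb{Q})$ equal to the restriction of $\hat{\alpha}$ on the $e_{L/K}$-summand and equal to multiplication by a sufficiently large integer $N$ on the $(1-e_{L/K})$-summand, with $N$ chosen so that $\beta \cdot f_L^r(K_{1-2r}(\oh_L[t^{\pm 1}])) \subseteq Y_r(L)$. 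Such a $\beta$ is then admissible, so $\det(\beta \oplus 1)\inv(\mathcal{R}_{f_L^r}^{-1}) \in \mathcal{J}_{L/F}^r$.

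What remains --- and this I expect to be the main obstacle --- is the identity $e_{L/K}\det(\beta \oplus 1) = e_{L/K}\det(\hat{\alpha})$, for then $e_{L/K}\det(\hat{\alpha})\inv(\mathcal{R}_{f_L^r}^{-1}) = e_{L/K} \cdot \det(\beta \oplus 1)\inv(\mathcal{R}_{f_L^r}^{-1}) \in e_{L/K}\mathcal{J}_{L/F}^r$. To prove it, note that both $\beta \oplus 1$ and $\hat{\alpha}$ commute with the idempotents $e_{L/K}$ and $1-e_{L/K}$, so in any $\mathbb{Q}[\gal{L/F}]$-basis of $Y_r(L) \otimes \mathbb{Q}$ the matrix of each decomposes as $A + B$ with entries of $A$ in $e_{L/K}\mathbb{Q}[\gal{L/F}]$ and of $B$ in $(1-e_{L/K})\mathbb{Q}[\gal{L/F}]$. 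Since these subrings annihilate each other, every cross-term in the Leibniz expansion of $\det(A+B)$ vanishes and $\det(A+B) = \det(A) + \det(B)$, with the summands sitting in the respective component subrings. Because $\beta$ and $\hat{\alpha}$ coincide on the $e_{L/K}$-summand by construction, their $A$-blocks agree, and multiplying through by $e_{L/K}$ --- which kills the $B$-contribution --- gives the desired identity, completing the proof.
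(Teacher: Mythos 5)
Your approach tracks the paper's own argument closely: the paper also establishes $\lambda_{K/F}(\mathcal{R}_{f_K^r}) = (1-e_{L/K}) + e_{L/K}\mathcal{R}_{f_L^r}$, introduces the endomorphism $\hat{\alpha}$ (equal to $\alpha$ on the $\mathrm{Gal}(L/K)$-invariant summand and the identity on the complement), computes determinants through the orthogonal idempotent decomposition, and concludes that the $e_{L/K}$-component of the resulting generator lies in $e_{L/K}\mathcal{J}_{L/F}^r$. Your Leibniz-expansion argument showing that $\det(A+B)=\det(A)+\det(B)$ when the entries of $A$ and $B$ lie in the two complementary idempotent subrings is correct and makes precise the determinant manipulations the paper uses more tersely.

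Where you genuinely improve on the paper's exposition is in noticing that $\hat{\alpha}$ itself need not satisfy the full integrality condition of \S\ref{2.1}, so it is not immediately a legitimate generator of $\mathcal{J}_{L/F}^r$; the paper simply asserts ``Therefore $e_{L/K}\det(\hat{\alpha})/\inv(\mathcal{R}_{f_L^r}) \in e_{L/K}\mathcal{J}_{L/F}^r$'' after only noting integrality on the invariant summand. Your $\beta$-construction is the right sort of remedy. However, the claim that \emph{some} integer $N$ makes $\beta$ admissible deserves more justification than you give. For $\lambda \in \Lambda_L := f_L^r(K_{1-2r}(\mathcal{O}_L[t^{\pm 1}]))$ one has $\beta(\lambda) = \alpha(e_{L/K}\lambda) + N(1-e_{L/K})\lambda$, and the first summand can acquire a denominator $[L:K]$ coming from the idempotent (since $e_{L/K}\lambda = [L:K]^{-1}\sum_g g\lambda$, and $\alpha$ is only known to be integral on $\Lambda_K$, which may sit strictly inside $\Lambda_L^{\mathrm{Gal}(L/K)}$). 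The $N$ you introduce only rescales the $(1-e_{L/K})$-summand, which cannot by itself cancel a denominator in the $e_{L/K}$-component. You would need either a verification that $\Lambda_L^{\mathrm{Gal}(L/K)}$ coincides with $\Lambda_K$ up to the $\mathbb{Z}[1/2]$ already built into the definition, or a more flexible replacement for $\beta$ that corrects the $e_{L/K}$-part as well (for instance replacing $\alpha$ by a suitable integral modification agreeing with $\alpha$ on the norm image). To be fair, the paper elides exactly this same point, so your proposal is at least as rigorous as the published argument; but it is a genuine technical point and should be flagged rather than absorbed into the word ``sufficiently.''
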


\subsection{Behaviour under corestriction maps} \label{2.9}

As in \S\ref{2.4}, suppose that $F \subseteq K \subseteq L$ is a tower of number fields with $L/F$ abelian. There is an additive homomorphism of the form
 \[   \iota_{K/F} : {\mathbb Q}[{\rm Gal}(L/F)]     \ra   {\mathbb Q}[{\rm Gal}(L/K)]   \]
called the transfer or corestriction map. In terms of
Proposition \ref{1.5} it is induced by the induction of representations
\[  {\rm Ind}_{{\rm Gal}(L/K)}^{{\rm Gal}(L/F)} :  R({\rm Gal}(L/K)) \ra    R({\rm Gal}(L/F))  .  \]
That is, the image $  \iota_{K/F} (h)$ of $h \in   {\rm Hom}_{\Omega_{\mathbb Q}}( R({\rm Gal}(L/F)) ,  \overline{{\mathbb Q}}) $ is given by
\[   \iota_{K/F} (h)(V) = h( {\rm Ind}_{{\rm Gal}(L/K)}^{{\rm Gal}(L/F)}(V) )  .  \]

By Frobenius reciprocity, for each $V \in R({\rm Gal})(L/K) )$ there is an isomorphism
\[  \begin{array}{l}
{\rm Hom}_{{\rm Gal}(L/F)}(( {\rm Ind}_{{\rm Gal}(L/K)}^{{\rm Gal}(L/F)})^{\vee}  ,  Y_{r}(L)^{+}  \otimes {\mathbb C})  \\
=  {\rm Hom}_{{\rm Gal}(L/K)}( V^{\vee} ,   Y_{r}(L)^{+}  \otimes {\mathbb C})  .
\end{array}  \]
Also $L_{F}^{*}(  r ,   {\rm Ind}_{{\rm Gal}(L/K)}^{{\rm Gal}(L/F)}(V)) =  L_{K}^{*}(r , V)$ so that
\[    \iota_{K/F}(  {\cal R}_{f_{L}^{r}} )  =      {\cal R}_{f_{L}^{r}}   .   \]

Now consider an endomorphism
\[   \alpha \in {\rm End}_{  {\mathbb Q}[{\rm Gal}(L/F)]}( Y_{r}(L)^{+}  \otimes {\mathbb Q} )  \]
satisfying the integrality condition of \S\ref{2.1}
\[  \alpha f_{r , L}( K_{1-2r}( {\cal O}_{L}[t^{\pm 1}]) ) \subseteq  Y_{r}(L)^{+} .  \]
Then it is straightforward to see from Proposition \ref{1.5} that the determinant of $\alpha \oplus 1$ as a map of ${\mathbb Q}[{\rm Gal}(L/F)]$-modules ${\rm det}_{{\mathbb Q}[{\rm Gal}(L/F)]}(\alpha \oplus 1)$
is mapped to  ${\rm det}_{{\mathbb Q}[{\rm Gal}(L/K)]}(\alpha \oplus 1)$, the determinant of $\alpha \oplus 1$ as  $\Q[\gal{L/K}]$-modules.

This discussion has established the following result. (This result has a problem. See Section~\ref{error}.)

\begin{prop} \label{2.10}

Suppose that $F \subseteq K \subseteq L$ is a tower of number fields with $L/F$ abelian, and let
 \[   \iota_{K/F} : {\mathbb Q}[{\rm Gal}(L/F)]     \ra   {\mathbb Q}[{\rm Gal}(L/K)]   \]
denote the additive homomorphism of \S\ref{2.9}. Then
 \[ \iota_{K/F}(  {\cal J}_{L/F}^{r}  )  \subseteq        {\cal J}_{L/K}^{r}   .   \]
\end{prop}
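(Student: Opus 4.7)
The plan is to directly apply $\iota_{K/F}$ to a typical generator of $\hJ{r}{L/F}$ and verify that its image lies in $\hJ{r}{L/K}$. A generator has the form
\[ \Det_{\Q[\gal{L/F}]}(\alpha \oplus 1) \cdot \inv(\mathcal{R}_{f_L^r}^{-1}), \]
for $\alpha \in \End_{\Q[\gal{L/F}]}(Y_r(L)^+ \teno{\Z} \Q)$ satisfying the integrality condition $\alpha \cdot f_L^r(K_{1-2r}(\oh_L[t^{\pm 1}])) \con Y_r(L)$ of \S\ref{2.1}.

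First I would observe that, since $\gal{L/K} \con \gal{L/F}$, any such $\alpha$ is automatically $\Q[\gal{L/K}]$-equivariant, and its integrality condition (involving only the image landing in the $\Z$-lattice $Y_r(L)$) is unchanged when the endomorphism ring is enlarged. Therefore $\Det_{\Q[\gal{L/K}]}(\alpha \oplus 1) \cdot \inv(\mathcal{R}_{f_L^r}^{-1})$ is a legitimate generator of $\hJ{r}{L/K}$.

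Then I would combine the two key identities worked out in the discussion immediately preceding the statement. The first is $\iota_{K/F}(\mathcal{R}_{f_L^r}) = \mathcal{R}_{f_L^r}$ (each side read in its own group ring via Proposition \ref{1.5}), which follows from Frobenius reciprocity for the $\Hom$-space defining the Stark regulator combined with the invariance of Artin $L$-functions under induction. The second is $\iota_{K/F}(\Det_{\Q[\gal{L/F}]}(\alpha \oplus 1)) = \Det_{\Q[\gal{L/K}]}(\alpha \oplus 1)$, which follows from the fact that for $\psi \in \chgrp{\gal{L/K}}$ the $\psi$-isotypic component of $Y_r(L) \teno{\Z} \overline{\Q}$ decomposes as the direct sum $\bigoplus_{\chi|_{\gal{L/K}} = \psi} e_\chi Y_r(L) \teno{\Z} \overline{\Q}$, together with multiplicativity of the determinant over direct sums. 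Using in addition that $\inv$, being complex conjugation on character values, commutes with the restriction map $\chgrp{\gal{L/F}} \to \chgrp{\gal{L/K}}$, these identities imply that the displayed generator of $\hJ{r}{L/F}$ is sent by $\iota_{K/F}$ to the displayed generator of $\hJ{r}{L/K}$.

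The delicate step, and the one I expect to be the main obstacle, is making precise the sense in which $\iota_{K/F}$ commutes with the product of the two factors. This is most transparent when viewed character-by-character via Proposition \ref{1.5}: on each $\psi \in \chgrp{\gal{L/K}}$, the image of each factor is a product over the fibre $\{\chi : \chi|_{\gal{L/K}} = \psi\}$, so the $\psi$-component of the product agrees with the product of the $\psi$-components, matching the generator of $\hJ{r}{L/K}$. Once this compatibility is in hand, letting $\alpha$ range over all admissible endomorphisms and extending by the $\Z[1/2][\gal{L/K}]$-action yields $\iota_{K/F}(\hJ{r}{L/F}) \con \hJ{r}{L/K}$, as required.
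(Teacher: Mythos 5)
Your proof takes the same two-step route as the paper's own discussion in \S\ref{2.9}: show that $\iota_{K/F}$ carries $\mathcal{R}_{f_L^r}$ for $L/F$ to the corresponding element for $L/K$ (via Frobenius reciprocity and inductive invariance of Artin $L$-functions), show that it carries $\det_{\Q[\gal{L/F}]}(\alpha\oplus 1)$ to $\det_{\Q[\gal{L/K}]}(\alpha\oplus 1)$ (via the decomposition of $\psi$-isotypic components into fibres), and multiply. The observation that $\inv$ commutes with $\iota_{K/F}$ and the remark that $\alpha$ remains an admissible $\Q[\gal{L/K}]$-endomorphism with the same integrality condition are both correct and implicit in the paper.

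There is, however, a point that you correctly flag but then brush aside, and where both your argument and the paper's are sketchy. You write that ``the image of each factor is a product over the fibre.'' But under Proposition \ref{1.5} an element of $\Q[\gal{L/F}]$ corresponds to a \emph{function on characters}, extended \emph{additively} to $R(\gal{L/F})$; since the paper defines $\iota_{K/F}(h)(V) = h(\Ind V)$ and $\Ind_{\gal{L/K}}^{\gal{L/F}}\psi = \sum_{\chi|_{\gal{L/K}}=\psi}\chi$ is a sum of $[\gal{L/F}:\gal{L/K}]$ distinct characters, the value $\iota_{K/F}(h)(\psi)$ for the additively-extended $h$ is a \emph{sum} over the fibre, not a product. (Taking $\alpha\oplus 1$ to be a scalar makes the discrepancy explicit: the sum gives $[\gal{L/F}:\gal{L/K}]\cdot\det_{\Q[\gal{L/F}]}(\alpha\oplus 1)(\chi)$, whereas $\det_{\Q[\gal{L/K}]}(\alpha\oplus 1)(\psi)$ is a product.) Your ``product over the fibre'' reading implicitly treats $\iota_{K/F}$ as the norm map $\Q[\gal{L/F}]\to\Q[\gal{L/K}]$, which \emph{is} the map under which both the regulator identity and the determinant identity hold, but then $\iota_{K/F}$ is no longer additive, and the passage from ``the image of a single generator lies in $\mathcal{J}_{L/K}^r$'' to ``the image of the whole $\Z[1/2][\gal{L/F}]$-submodule lies in $\mathcal{J}_{L/K}^r$'' can no longer be made by linearity. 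This tension is present in the paper's treatment as well; you should resolve it by making explicit which extension to $R(\cdot)$ (additive or multiplicative) is being used at each step and by addressing the module containment rather than only the generators.
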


\subsection{} \label{nat irrats}

We can now explain the second example in \S \ref{nat examples}, \ie Proposition \ref{nat irrats example}. Let us work more generally to begin with. $E$ and $F$ can be any number fields, and we suppose we have a diagram
\[ \xymatrix{
& E \ar@{-}[dl]_C \ar@{-}[dr]^H & \\
L \ar@{-}[dr]^{G'} & & F \ar@{-}[dl] \\
& K & 
} \]
satisfying the following: $E/K$ is Galois (though not necessarily abelian), $LF = E$, $L \cap F = K$, the extension $L/K$ is abelian (and hence so is $E/F$), and $L/K$ and $E/F$ satisfy the Stark conjecture. We let $G = \gal{E/K}$, and the Galois groups of the other Galois extensions are marked in the diagram. We observe that $C$ need not be abelian here.

Owing to the natural isomorphism $G/C \ra H$, each character $\psi \in \chgrp{H}$ extends to a unique one-dimensional representation $\widehat{\psi} : G \ra \C\st$ which is trivial on $C$. Denote by $\irrch{G}$ the set of irreducible characters of $G$. Then having chosen a $\Q[G]$-module isomorphism $f$ as in \S \ref{stark conjectures}, we can define an element $\BC{f} \in \C[H]\st$ by
\[ \BC{f} = \prod_{\chi \in \irrch{G} \setm \{1\}} \left(\sum_{\psi \in \chgrp{H}} \VR{f}_{E/K}(\chi \widehat{\psi})^{d_\chi} e_\psi \right) ,\]
where for a character $\chi$ of $G$, $d_\chi$ is the multiplicity of the trivial character of $H$ in $\Res_H^G(\chi)$. We have opted to denote by $\VR{f}_{E/K}$ the group-ring element $\mathcal{R}_{f_E}$ defined in \S \ref{2.1}, to emphasize which extension is being considered.

The following lemma shows that the group-ring element $\VR{f}_{E/F}$ for the extension $E/F$ is related, via $\BC{f}$, to the corresponding element for the extension $L/K$.

\begin{lemma} \label{base change}
$\BC{f}$ has rational coefficients, and the image of $\VR{f}_{E/F}$ under the isomorphism $\Phi : \Q[H] \ra \Q[G']$ is
\[ \VR{f'}_{L/K} \Phi(\BC{f}) ,\]
where $f'$ is the $\Q[G']$-module isomorphism making diagram (\ref{f chosen to commute}) commute.
\end{lemma}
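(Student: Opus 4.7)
The plan is to verify the claimed identity one character at a time via the ring isomorphism of Proposition \ref{1.5}, and to deduce rationality of $\BC{f}$ as a corollary of the factorization rather than by a separate $\Omega_\Q$-equivariance argument. First, I would observe that the function $\VR{f}_{E/K} : \chi \mapsto R(\chi, f_E)/L_K^*(r, \chi)$ on $\irrch{G}$ extends uniquely to a group homomorphism $R(G) \to \C\st$: the $L$-function side is multiplicative over direct sums, and $R(V \oplus W, f_E)$ is the determinant of a block-diagonal operator on $\Hom_G((V \oplus W)^\vee, Y_r(E)^+ \otimes \C)$, hence factors as $R(V, f_E) R(W, f_E)$.

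The core calculation now proceeds in two moves. First, Frobenius reciprocity yields a canonical isomorphism $\Hom_H(\psi^\vee, Y_r(E)^+ \otimes \C) \cong \Hom_G((\Ind_H^G \psi)^\vee, Y_r(E)^+ \otimes \C)$ which intertwines the action of $R_E^r \cdot (f_E^r)^{-1}$; together with the standard induction formula $L_F^*(r, \psi) = L_K^*(r, \Ind_H^G \psi)$ this gives $\VR{f}_{E/F}(\psi) = \VR{f}_{E/K}(\Ind_H^G \psi)$ for every $\psi \in \chgrp{H}$. Second, writing $\psi = \Res_H^G \widehat{\psi}$ and applying the projection formula, one has $\Ind_H^G \psi = \widehat{\psi} \otimes \Ind_H^G(1) = \sum_{\chi \in \irrch{G}} d_\chi (\chi \widehat{\psi})$ in $R(G)$, with $d_1 = 1$. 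Multiplicativity of $\VR{f}_{E/K}$ then yields
\[ \VR{f}_{E/F}(\psi) = \VR{f}_{E/K}(\widehat{\psi}) \cdot \prod_{\chi \neq 1} \VR{f}_{E/K}(\chi \widehat{\psi})^{d_\chi} , \]
and (unwinding Proposition \ref{1.5}) the second factor is precisely the value at $\psi$ of the function on $\chgrp{H}$ corresponding to $\BC{f}$.

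The delicate step, and the main obstacle, is identifying $\VR{f}_{E/K}(\widehat{\psi})$ with $\VR{f'}_{L/K}(\psi')$, where $\psi' \in \chgrp{G'}$ corresponds to $\psi$ under $H \cong G/C \cong G'$. Since $\widehat{\psi}$ is trivial on $C$, one has $\Hom_G(\widehat{\psi}^\vee, Y_r(E)^+ \otimes \C) = \Hom_{G'}(\psi'^\vee, (Y_r(E)^+)^C \otimes \C)$, and there is a natural $G'$-equivariant identification $Y_r(L) \cong Y_r(E)^C$ sending $\tau \in \Sigma(L)$ to $\sum_{\sigma | \tau} \sigma$. Under this identification the restriction of $R_E^r \cdot (f_E^r)^{-1}$ coincides with $R_L^r \cdot ({f'}^r)^{-1}$, precisely by the compatibility diagram (\ref{f chosen to commute}) applied to the tower $K \subseteq L \subseteq E$, so the two determinants match; inflation invariance of Artin $L$-functions handles the denominators. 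Combining with the preceding display, the character-wise equality $\VR{f}_{E/F}(\psi) = \VR{f'}_{L/K}(\psi') \cdot \BC{f}(\psi)$ translates via Proposition \ref{1.5} into the claimed $\Phi(\VR{f}_{E/F}) = \VR{f'}_{L/K} \cdot \Phi(\BC{f})$. Since Stark for $E/F$ and $L/K$ places $\VR{f}_{E/F} \in \Q[H]\st$ and $\VR{f'}_{L/K} \in \Q[G']\st$, solving for $\Phi(\BC{f})$ shows that it lies in $\Q[G']$, giving rationality of $\BC{f}$ for free.
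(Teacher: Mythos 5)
Your proof is correct and is essentially the approach the paper has in mind: the paper dispatches the lemma with the single remark that it is ``little more than a combination of \S\ref{2.3} and \S\ref{2.9},'' and your argument is exactly that combination made precise. Concretely, your step $\VR{f}_{E/F}(\psi) = \VR{f}_{E/K}(\Ind_H^G\psi)$ (Frobenius reciprocity plus $L$-function inductivity) is the character-level content of \S\ref{2.9}, your identification $\VR{f}_{E/K}(\widehat{\psi}) = \VR{f'}_{L/K}(\psi')$ via $Y_r(L) \cong Y_r(E)^C$ and diagram~(\ref{f chosen to commute}) is the character-level content of \S\ref{2.3}, and the projection formula $\Ind_H^G\psi = \widehat{\psi}\otimes\Ind_H^G(1) = \sum_\chi d_\chi(\chi\widehat{\psi})$ is the routine bridge between the two. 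You are also right to observe that these character-by-character arguments go through for non-abelian $G$, which matters here since the hypotheses of \S\ref{2.3} and \S\ref{2.9} as literally stated assume an abelian top group. Deriving rationality of $\BC{f}$ by solving the identity $\Phi(\VR{f}_{E/F}) = \VR{f'}_{L/K}\Phi(\BC{f})$ and invoking Stark for $E/F$ and $L/K$ is a legitimate shortcut; one could alternatively check $\Omega_\Q$-equivariance of $\BC{f}$ directly from its definition, but your route is cleaner given the standing hypotheses.
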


The proof of the lemma is little more than a combination of \S \ref{2.3} and \S \ref{2.9}.

In the situation of Proposition \ref{nat irrats example} (with $L = E^+$ and $K = \Q$ now) we find that the element $2\halfst$ occurring there is just $\inv(\BC{f})^{-1}$ (for any choice of $f$ in this case). Indeed, let $\rho \in \chgrp{G}$ be the unique non-trivial character extending the trivial character of $H$. Then the only $\chi \in \irrch{G} \setm \{1\}$ with $d_\chi \not= 0$ is $\rho$, and $d_\rho = 1$, so
\begin{eqnarray*}
\BC{f} &=& \sum_{\psi \in \chgrp{H}} \VR{f}_{E/\Q}(\rho\widehat{\psi}) e_\psi \\
&=& \sum_{\substack{\psi \in \chgrp{G} \\ \psi \mathrm{ even}}} \VR{f}_{E/\Q}(\rho \psi) e_{\psi|_H} .
\end{eqnarray*}
However, for $\psi$ even, $\rho \psi$ is odd so that $\VR{f}(\rho \psi) = L_{E/\Q,S}(0,\rho \psi)^{-1}$. Using the easily verified fact that $(1-c)\halfst = \stick_{E/\Q,S}$, where $c \in G$ is complex conjugation, we see that $L_{E/\Q,S}(0,\rho \psi) = 2\psi|_H(\inv \halfst)$, from which the assertion follows.

Applying Lemma \ref{base change} now justifies the appearance of $2\Phi_n(\halfst_n)$ in Proposition \ref{nat irrats example}.

\section{The passage to non-abelian groups}

\subsection{} \label{3.1} In this section we shall use the Explicit Brauer Induction constructions of \cite[pp.138--147]{snaith:ebi} to pass from finite abelian Galois groups to the non-abelian case.

Let $G$ be a finite group and consider the additive homomorphism
\[    \sum_{H \subseteq G}  \  \Ind_{H}^{G} \Inf_{H\ab}^{H}  :  \oplus_{H \subseteq G}  \  R(H\ab)  \ra  R(G) .\]

Let $N \lhd G$ be a normal subgroup and let $\pi : G \ra  G/N$ denote the quotient homomorphism.

Define a homomorphism
\[ \alpha_{G,N} :   \oplus_{J \subseteq G/N}  \  R(J\ab)   \ra  \oplus_{H \subseteq G}  \  R(H\ab)   \]
to be the homomorphism which sends the $J$-component $R(J\ab) $ to the $H = \pi^{-1}(J)$-component $R( \pi^{-1}(J)\ab) $ via the map
\[   \Inf_{J\ab}^{\pi^{-1}(J)\ab}(R(J\ab))     \ra   R(\pi^{-1}(J)\ab) .  \]

\begin{lemma} \label{3.2}
In the notation of \S\ref{3.1} the following diagram commutes:
\[ \xymatrix{
\bigoplus_{J \con G/N} R(J\ab) \ar[r] \ar[d]^{\alpha_{G,N}} & R(G/N) \ar[d]^{\Inf_{G/N}^G} \\
\bigoplus_{H \con G} R(H\ab) \ar[r] & R(G) .
} \]
\end{lemma}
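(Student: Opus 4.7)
The plan is to trace a single element $V \in R(J\ab)$, for a fixed subgroup $J \con G/N$, around both compositions of the diagram and to verify the results coincide in $R(G)$. Going right and then down, $V$ is sent to
\[ \Inf_{G/N}^G \, \Ind_J^{G/N} \, \Inf_{J\ab}^J(V). \]
Going down and then right, the map $\alpha_{G,N}$ places $V$ in the $\pi^{-1}(J)$-summand as $\Inf_{J\ab}^{\pi^{-1}(J)\ab}(V)$, after which the bottom arrow yields
\[ \Ind_{\pi^{-1}(J)}^G \, \Inf_{\pi^{-1}(J)\ab}^{\pi^{-1}(J)} \, \Inf_{J\ab}^{\pi^{-1}(J)\ab}(V). \]
Equating these two elements of $R(G)$ is what must be shown, summand by summand.

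Two standard identities will reduce both sides to a common form. First, because $\pi$ restricts to a surjection $\pi^{-1}(J) \ra J$ with kernel $N$ and induces the identification $\pi^{-1}(J)/N \iso J$ compatibly with $G/N$, there is a well-known compatibility of induction with inflation along $\pi$, namely
\[ \Inf_{G/N}^G \, \Ind_J^{G/N} = \Ind_{\pi^{-1}(J)}^G \, \Inf_J^{\pi^{-1}(J)}. \]
This can be checked on class functions using Frobenius' formula for the induced character, or directly from the construction of the induced representation as a space of functions. Second, the universal property of abelianisation, applied to $\pi\vert_{\pi^{-1}(J)} : \pi^{-1}(J) \ra J$, shows that the two factorisations of the canonical map $\pi^{-1}(J) \ra J\ab$ through either $J$ or $\pi^{-1}(J)\ab$ coincide; functoriality of $\Inf$ then gives
\[ \Inf_J^{\pi^{-1}(J)} \, \Inf_{J\ab}^J \;=\; \Inf_{J\ab}^{\pi^{-1}(J)} \;=\; \Inf_{\pi^{-1}(J)\ab}^{\pi^{-1}(J)} \, \Inf_{J\ab}^{\pi^{-1}(J)\ab}. \]

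Combining the two identities, both compositions deliver $\Ind_{\pi^{-1}(J)}^G \, \Inf_{J\ab}^{\pi^{-1}(J)}(V)$, so the diagram commutes on the $J$-summand; summing over $J$ gives the result. The only delicate point is the inflation-induction compatibility in the first identity; it is classical but, to avoid confusion over the direction of the maps, it is worth spelling out that the identification $\pi^{-1}(J)/N \iso J$ is exactly what makes $\pi^{-1}(J)$ the ``right'' pre-image for pulling back an induction from $G/N$. Everything else is the universal property of abelianisation and functoriality of $\Inf$ and $\Ind$.
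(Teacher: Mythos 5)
Your proof is correct and follows essentially the same route as the paper: it uses the same two key identities, namely the inflation--induction compatibility $\Inf_{G/N}^G \Ind_J^{G/N} = \Ind_{\pi^{-1}(J)}^G \Inf_J^{\pi^{-1}(J)}$ (which relies on the kernels coinciding with $N$), and the commutation of the two ways of inflating from $J\ab$ to $\pi^{-1}(J)$. The only cosmetic difference is that you verify the first identity on class functions while the paper simply invokes it; the structure of the argument is identical.
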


\begin{proof}
Since the kernel of $\pi^{-1}(J) \ra J$ and that of $\pi : G \ra  G/N$ coincide, both being equal to $N$, we have
\[    \Inf_{G/N}^{G} \Ind_{J}^{G/N}  =  \Ind_{\pi^{-1}(J) }^{G} \Inf_{J}^{ \pi^{-1}(J)   } .      \]
Therefore, given a character $\phi : J\ab \ra \overline{{\mathbb Q}}\st$ in the $J$-coordinate, we have
\[  \begin{array}{ll}
 \Ind_{ \pi^{-1}(J) }^{G} \Inf_{\pi^{-1}(J)\ab}^{\pi^{-1}(J)}  \alpha_{G,N}(\phi) & =     \Ind_{ \pi^{-1}(J) }^{G}\Inf_{\pi^{-1}(J)\ab}^{\pi^{-1}(J)}  \Inf_{J\ab}^{\pi^{-1}(J)\ab}( \phi)  \\
 & =     \Ind_{ \pi^{-1}(J) }^{G}\Inf_{J}^{\pi^{-1}(J)}  \Inf_{J\ab}^{J}( \phi) \\
 & =     \Inf_{G/N}^{G} \Ind_{J}^{G/N}  \Inf_{J\ab}^{J}( \phi) ,
 \end{array} \]
 as required.
\end{proof}

\subsection{} \label{3.3} The homomorphism of \S\ref{3.1} is invariant under group conjugation and therefore induces an additive homomorphism of the form
 \[   B_{G}  : ( \oplus_{H \subseteq G}  \  R(H\ab))_{G}  \ra  R(G) \]
 where $X_{G}$ denotes the coinvariants of the conjugation $G$-action. This homomorphism is a split surjection whose right inverse is given by the Explicit Brauer Induction homomorphism
 \[ A_{G} : R(G) \ra   ( \oplus_{H \subseteq G}  \  R(H\ab))_{G} \]
 constructed in \cite[Section 4.5.16]{snaith:ebi}. We shall be interested in the dual homomorphisms (\cite[Section 4.5.20]{snaith:ebi})
\[   B_{G}^{*} : {\rm Hom}_{\Omega_{{\mathbb Q}}}( R( G) ,
\overline{{\mathbb Q}})   \ra    ( \oplus_{H \subseteq G} \ {\rm Hom}_{\Omega_{{\mathbb Q}}}( R( H\ab)    , \overline{{\mathbb Q}}) )^{G} \]
 and
 \[ A_{G}^{*} :   ( \oplus_{H \subseteq G} \ {\rm Hom}_{\Omega_{{\mathbb Q}}}( R( H\ab)    , \overline{{\mathbb Q}}) )^{G}    \ra {\rm Hom}_{\Omega_{{\mathbb Q}}}( R( G) ,
\overline{{\mathbb Q}})   \]
 where $X^{G}$ denotes the subgroup of $G$-invariants.

 As in \cite[Def.4.5.4]{snaith:ebi}, denote by ${\mathbb Q}\{G\}$ the rational vector space whose basis consists of the conjugacy classes of $G$. There is an isomorphism (\cite[Prop.4.5.14]{snaith:ebi})
 \[ \psi :  {\mathbb Q}\{G\}   \stackrel{\cong}{\ra} {\rm Hom}_{\Omega_{{\mathbb Q}}}( R( G) ,
\overline{{\mathbb Q}})  \]
 given by the formula $\psi( \sum_{\gamma} \ m_{\gamma}  \gamma)( \rho) = \sum_{\gamma} \ m_{\gamma} {\rm Trace}(\rho(\gamma))$.

 When $G$ is abelian, we have $ {\mathbb Q}\{G\} =  {\mathbb Q}[G]$ and under the identification
 \[   {\rm Hom}_{\Omega_{{\mathbb Q}}}( R( G) ,
\overline{{\mathbb Q}})   =    {\rm Map}_{\Omega_{{\mathbb Q}}}(\chgrp{G} ,  \overline{{\mathbb Q}}) \]
 of Proposition \ref{1.5} we have $\psi(g) = ( \chi \mapsto \chi(g))$, which is a ring isomorphism inverse to $\lambda_{G}$.

\section{${\cal J}_{\ef/\gf}^{r}$ in general } \label{4.1}

 Let $G$ denote the Galois group of a finite Galois extension $E/F$ of number fields. Hence each subgroup of $G$ has the form $H =   {\rm Gal}(E/E^{H})$, whose abelianization  is $H\ab =  {\rm Gal}(E^{[H,H]}/E^{H})$ where $[H,H]$ is the commutator subgroup of $H$. For each integer $r= 0, -1, -2, -3, \ldots$, we have the canonical fractional Galois ideal ${\cal J}_{E^{[H,H]}/E^{H}}^{r} \subseteq{\mathbb Q}[H\ab]$ as defined in \S\ref{2.1}.

 \begin{definition} \label{4.2}
 In the notation of \S\ref{4.1}, define a subgroup ${\cal J}_{\ef/\gf}^{r}$ of ${\mathbb Q}\{G\}$ by
 \[   {\cal J}_{\ef/\gf}^{r}  =  (B_{G}^{*})^{-1}(  \oplus_{H \subseteq G} \   {\cal J}_{E^{[H,H]}/E^{H}}^{r})  .  \]
 \end{definition}

 \begin{lemma} \label{4.3}
 In \S\ref{4.1} and Definition \ref{4.2}, when $G = {\rm Gal}(\ef/\gf)$ is abelian then ${\cal J}_{\ef/\gf}^{r}$ coincides with the canonical fractional Galois ideal of \S\ref{2.1}.
 \end{lemma}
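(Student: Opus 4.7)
The plan is to compare the two definitions by unpacking $B_G^*$ explicitly in the abelian setting and invoking the corestriction property of Proposition \ref{2.10}. First I would observe that when $G$ is abelian, every subgroup $H \subseteq G$ is abelian with $H\ab = H$ and $[H,H] = \{1\}$, so $E^{[H,H]} = E$ and the constituent ideals in Definition \ref{4.2} are exactly the abelian $\mathcal{J}_{E/E^H}^{r}$ of \S\ref{2.1}. Moreover, the conjugation action of $G$ on $\bigoplus_{H \subseteq G} R(H\ab)$ is trivial, so the $G$-coinvariants and $G$-invariants of \S\ref{3.3} play no role, and $\mathbb{Q}\{G\} = \mathbb{Q}[G]$ as in the closing observation of \S\ref{3.3}.

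Next I would make $B_G^*$ explicit. Since $\mathrm{Inf}_{H\ab}^{H}$ is the identity for each abelian $H$, the map $B_G$ reduces to $\sum_{H \subseteq G} \mathrm{Ind}_{H}^{G}$, and consequently, for $h \in \mathrm{Hom}_{\Omega_{\mathbb{Q}}}(R(G), \overline{\mathbb{Q}}\st)$, the $H$-component of $B_G^*(h)$ is $h \circ \mathrm{Ind}_{H}^{G}$. Under the identification of Proposition \ref{1.5}, this operation is precisely the corestriction map $\iota_{E^H/F} : \mathbb{Q}[G] \ra \mathbb{Q}[H]$ of \S\ref{2.9}, which by its very definition is dual to induction of representations. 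Hence $B_G^*(h) = (\iota_{E^H/F}(h))_{H \subseteq G}$.

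With this identification both inclusions are immediate. If $h \in (B_G^*)^{-1}(\bigoplus_H \mathcal{J}_{E/E^H}^{r})$, then taking the component at $H = G$, where $\mathrm{Ind}_G^G = \mathrm{id}$ and $E^G = F$, we see $h = \iota_{F/F}(h) \in \mathcal{J}_{E/E^G}^{r} = \mathcal{J}_{E/F}^{r}$ in the original sense of \S\ref{2.1}. Conversely, if $h$ lies in the original $\mathcal{J}_{E/F}^{r}$, then Proposition \ref{2.10} applied to each tower $F \subseteq E^H \subseteq E$ yields $\iota_{E^H/F}(h) \in \mathcal{J}_{E/E^H}^{r}$ for every $H \subseteq G$, so $B_G^*(h) \in \bigoplus_H \mathcal{J}_{E/E^H}^{r}$ and $h$ is in the ideal defined by Definition \ref{4.2}. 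There is no serious obstacle here; the argument rests entirely on recognizing that, in the abelian case, the $H = G$ component of $B_G^*$ alone is the identity and that all other components are captured by the corestriction maps whose good behaviour is precisely the content of Proposition \ref{2.10}.
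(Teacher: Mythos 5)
Your argument is correct and is essentially the paper's proof: both decompose the $H$-component of $B_G^*$ (which in the abelian case reduces to the corestriction $\iota_{E^H/F}$, since $\mathrm{Inf}_{H\ab}^H$ is the identity), use Proposition \ref{2.10} to get the inclusion $\mathcal{J}_{E/F}^r \subseteq (B_G^*)^{-1}(\bigoplus_H \mathcal{J}_{E/E^H}^r)$, and use the fact that the $H = G$ component of $B_G^*$ is the identity for the reverse inclusion. The only cosmetic difference is that the paper also cites Proposition \ref{2.4} for the quotient $\pi_{E/E^{[H,H]}}$, but as you correctly observe this map is the identity when $G$ is abelian, so that citation does no real work.
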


\begin{proof}
 The $H$-component of $B_{G}^{*}$ has the form
 \[   {\mathbb Q}[ {\rm Gal}(\ef/\gf)]  \stackrel{i_{E^{H}/F}}{\ra } {\mathbb Q}[ {\rm Gal}(E/E^{H})]     \stackrel{\pi_{E/E^{[H,H]}}}{\ra}    {\mathbb Q}[ {\rm Gal}(E^{[H,H]}/E^{H})]    \]
   which maps ${\cal J}_{\ef/\gf}^{r} $ to ${\cal J}_{E^{[H,H]}/E^{H}}^{r} $ by Proposition \ref{2.4} and Proposition \ref{2.10} so that
   \[   {\cal J}_{\ef/\gf}^{r}  \subseteq    (B_{G}^{*})^{-1}(   \oplus_{H \subseteq G} \   {\cal J}_{E^{[H,H]}/E^{H}}^{r} )   .  \]
   On the other hand, the $G$-component of $B_{G}^{*}$ is the identity map from ${\mathbb Q}[G]$ to itself. Therefore if $z \in {\mathbb Q}[G] \setm {\cal J}_{\ef/\gf}^{r}$ then $B_{G}^{*}(z) \not\in    \oplus_{H \subseteq G} \   {\cal J}_{E^{[H,H]}/E^{H}}^{r}$, as required.
\end{proof}

\begin{prop} \label{4.4}
Suppose that $F \subseteq K \subseteq L$ is a tower of finite extensions of number fields with $L/F$ and $K/F$ Galois. Then, for $r=0, -1, -2, -3, \ldots $, the canonical homomorphism
\[    \pi_{L/K} : {\mathbb Q}\{ {\rm Gal}(L/F)  \}  \ra    {\mathbb Q}\{ {\rm Gal}(K/F)  \}   \]
satisfies $ \pi_{L/K}( {\cal J}_{\ef/\gf}^{r}  ) \subseteq  {\cal J}_{K/F}^{r} $.
\end{prop}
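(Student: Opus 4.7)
The plan is to reduce the statement to the abelian case and apply Proposition \ref{2.4} componentwise. Write $G = \gal{L/F}$, $N = \gal{L/K}$, so $G/N = \gal{K/F}$, and let $\pi : G \ra G/N$ denote the quotient map.

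First I would identify $\pi_{L/K}$ with the dual of inflation under the isomorphism $\psi$ of \S\ref{3.3}: for $g \in G$ and $\sigma \in R(G/N)$,
\[ \psi(\pi(g))(\sigma) = \mathrm{Trace}(\sigma(\pi(g))) = \mathrm{Trace}((\Inf_{G/N}^G \sigma)(g)) = \psi(g)(\Inf_{G/N}^G \sigma) . \]
Applying $\Hom_{\Omega_\Q}(-, \overline{\Q})$ to the commutative square of Lemma \ref{3.2}, whose verticals are $\alpha_{G,N}$ and $\Inf_{G/N}^G$ and whose horizontals induce $B_{G/N}$ and $B_G$, then yields
\[ B_{G/N}^* \circ \pi_{L/K} = \alpha_{G,N}^* \circ B_G^* . \]

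Second I would analyse the summands of $\alpha_{G,N}^*$. Fix $J \con G/N$ and set $H = \pi^{-1}(J) \con G$; then $L^H = (L^N)^J = K^J$, and $K^{[J,J]}$ is an abelian extension of $K^J$ lying inside the maximal such extension $L^{[H,H]}$ of $L/L^H$. The canonical surjection $H\ab \twoheadrightarrow J\ab$ is precisely Galois restriction $\gal{L^{[H,H]}/L^H} \twoheadrightarrow \gal{K^{[J,J]}/K^J}$, and the reasoning of \S\ref{2.3} shows that the dual of $\Inf_{J\ab}^{H\ab}$, interpreted through Proposition \ref{1.5}, coincides with this quotient $\Q[H\ab] \twoheadrightarrow \Q[J\ab]$. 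Proposition \ref{2.4} applied to the tower $L^H \con K^{[J,J]} \con L^{[H,H]}$ therefore shows that the $(H,J)$-component of $\alpha_{G,N}^*$ sends $\J{L^{[H,H]}/L^H}^r$ into $\J{K^{[J,J]}/K^J}^r$.

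Finally, given $z \in \J{L/F}^r$, Definition \ref{4.2} yields $B_G^*(z) \in \bigoplus_{H \con G} \J{L^{[H,H]}/L^H}^r$, and the componentwise claim gives
\[ \alpha_{G,N}^*(B_G^*(z)) \in \bigoplus_{J \con G/N} \J{K^{[J,J]}/K^J}^r . \]
The commutativity relation identifies this element with $B_{G/N}^*(\pi_{L/K}(z))$, so Definition \ref{4.2} forces $\pi_{L/K}(z) \in \J{K/F}^r$. The main obstacle is the bookkeeping in the second step: one must verify that only subgroups $H$ containing $N$ contribute to the output of $\alpha_{G,N}^*$ and that the abelian quotient $H\ab \twoheadrightarrow J\ab$ coming out of Explicit Brauer Induction is genuinely the Galois restriction setting to which Proposition \ref{2.4} applies.
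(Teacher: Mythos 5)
Your proposal is correct and is essentially a full unpacking of the paper's one-line proof, which cites exactly the same three ingredients (Lemma \ref{3.2}, Proposition \ref{2.4}, Definition \ref{4.2}). The bookkeeping concern you flag at the end does resolve: since $\alpha_{G,N}$ routes the $J$-summand to the $\pi^{-1}(J)$-summand, only $H \supseteq N$ contribute to $\alpha_{G,N}^*$, and for such $H$ one has $K^{[J,J]} = L^{[H,H]N} \subseteq L^{[H,H]}$ with $\gal{L^{[H,H]}/L^H} \twoheadrightarrow \gal{K^{[J,J]}/K^J}$ the usual restriction, so Proposition \ref{2.4} applies to the tower $L^H \subseteq K^{[J,J]} \subseteq L^{[H,H]}$ exactly as you claim.
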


\begin{proof}
This follows immediately from Proposition \ref{2.4}, Lemma \ref{3.2} and Definition \ref{4.2}.
\end{proof}

\begin{definition} \label{4.5}
Let $F$ be a number field and $L/F$ a (possibly infinite) Galois extension with Galois group $G = {\rm Gal}(L/F)$. For $r=0, -1, -2, -3, \ldots$ define ${\cal J}_{\ef/\gf}^{r} $ to be the abelian group
\[ {\cal J}_{\ef/\gf}^{r}   =  \proo{H} {\cal J}_{L^{H}/F}^{r} ,\]
where $H$ runs through the open normal subgroups of $G$.
\end{definition}

\section{${\cal J}_{\ef/\gf}^{r} $ and the annihilation of \\ $ H_{{\rm \acute{e}t}}^{2}({\rm Spec}({\cal O}_{L,S}) , {\mathbb Z}_{\prm}(1-r))$}


\subsection{} \label{5.1} Let $\prm$ be an odd prime. We continue to assume the Stark conjecture as stated in \S\ref{stark conjectures} for $r = 0, -1, -2, -3, \ldots $. Replacing ${\mathbb Q}$ by ${\mathbb Q}_{\prm}$ in \S\ref{2.1} and Definition \ref{4.2} we may associate a finitely generated ${\mathbb Z}_{\prm}$-submodule of ${\mathbb Q}_{\prm}\{ {\rm Gal}(\ef/\gf) \}$, again denoted by ${\cal J}_{\ef/\gf}^{r} $, to any finite extension $\ef/\gf$ of number fields.

In this section we are going to explain a conjectural procedure to pass from ${\cal J}_{\ef/\gf}^{r} $ to the construction of elements in the annihilator ideal of the \'{e}tale cohomology of the ring of $S$-integers of $E$,
\[     {\rm ann}_{{\mathbb Z}_{\prm}[G(\ef/\gf)]}( H_{{\rm \acute{e}t}}^{2}(
{\rm Spec}({\cal O}_{E,S(E)}) , {\mathbb Z}_{\prm}(1-r))) ,\]
where $S$ denotes a finite set of primes of $F$ including all archimedean primes and all finite primes which ramify in $\ef/\gf$, and $S(E)$ denotes all the primes of $E$ over those in $S$. This conjectural procedure was first described in \cite[Thm.8.1]{snaith:stark}.

We shall restrict ourselves to the case when $r = -1,-2, -3, \ldots$.  In several ways this is a simplification over the case when $r=0$. In this case $H_{{\rm \acute{e}t}}^{1}(
{\rm Spec}({\cal O}_{E, S(E)}) , {\mathbb Z}_{\prm}(1-r))$ is independent of $S(E)$, while it is related to the group of $S(E)$-units when $r=0$. Also,  when $ r \leq -1$,  $H_{{\rm \acute{e}t}}^{2}(
{\rm Spec}({\cal O}_{E,S(E)}) , {\mathbb Z}_{\prm}(1-r))$ is a subgroup of the corresponding cohomology group when $S(E)$ is enlarged to $S'(E)$, but when $r=0$ the class-group of ${\cal O}_{E, S'(E)}$ is a quotient of that of ${\cal O}_{E , S(E)}$. Furthermore (see \cite{buckingham:frac}, \cite{tate:stark}), there are subtleties concerning whether or not to use the $S$-modified $L$-function in \S \ref{notation} when $r=0$, while for $r \leq -1$ this is immaterial.

When $r=0$ the annihilator procedure is similar to the other cases but the additional complications have prompted us to omit this case.

Write $G = {\rm Gal}(\ef/\gf) $, and for each subgroup $H = {\rm Gal}(E/E^{H})   \subseteq G$ \linebreak let $S(E^{H})$ denote the set of primes of $E^{H}$ above those of $S$. Then $H\ab = {\rm Gal}(E^{[H,H]}/E^{H})$ where $[H,H]$ denotes the commutator subgroup of $H$. The following conjecture originated in \cite{snaith:equiv,snaith:rel,snaith:stark}.

\begin{conj} \label{5.2}
In the notation of \S\ref{5.1}, when $r = -1, -2, -3, \ldots$,

(i)  \   {\bf Integrality:}
\[  {\cal J}_{E^{[H,H]}/E^{H}}^{r} \cdot    {\rm ann}_{{\mathbb Z}_{\prm}[H\ab]}( {\rm Tors} H_{{\rm \acute{e}t}}^{1}(
{\rm Spec}({\cal O}_{E^{[H,H]},S}) , {\mathbb Z}_{\prm}(1-r)))   \subseteq {\mathbb Z}_{\prm}[H\ab]  .  \]

(ii)  \  {\bf Annihilation:}
\[   \begin{array}{l} {\cal J}_{E^{[H,H]}/E^{H}}^{r} \cdot    {\rm ann}_{{\mathbb Z}_{\prm}[H\ab]}( {\rm Tors} H_{{\rm \acute{e}t}}^{1}(
{\rm Spec}({\cal O}_{E^{[H,H]},S}) , {\mathbb Z}_{\prm}(1-r)))  \\
 \subseteq {\rm ann}_{{\mathbb Z}_{\prm}[H\ab]}( H_{{\rm \acute{e}t}}^{2}(
{\rm Spec}({\cal O}_{E^{[H,H]},S}) , {\mathbb Z}_{\prm}(1-r))) .
 \end{array}   \]
\end{conj}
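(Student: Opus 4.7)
Since the statement is made one subgroup $H$ at a time, and each instance involves only the abelianisation $H\ab = \gal{E^{[H,H]}/E^{H}}$ and the canonical fractional Galois ideal of the abelian extension $E^{[H,H]}/E^{H}$ as defined in \S\ref{2.1}, it suffices to prove both parts for an abelian Galois extension --- I relabel this as $E/F$ and set $G = \gal{E/F}$. The plan is to interpret each generator $\Det_{\Q[G]}(\alpha \oplus 1) \cdot \inv({\cal R}_{f_E^r}^{-1})$ of ${\cal J}_{E/F}^{r}$ as the equivariant determinant of an endomorphism of a two-term perfect complex $C\codot$ of $\Zprm[G]$-modules computing $\rgam{\spec\oh_{E,S},\Zprm(1-r)}$, and then to exploit the Fitting-ideal formalism for two-term complexes.

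First I would construct $C\codot$: because $r \leq -1$ the cohomology groups $\het^{i}(\spec\oh_{E,S},\Zprm(1-r))$ are independent of enlarging $S$ away from $\prm$, and standard arguments yield a perfect complex of projective $\Zprm[G]$-modules concentrated in degrees $1$ and $2$ in the correct quasi-isomorphism class. The $\prm$-adic Chern character, whose bijectivity in the relevant degrees follows from the Quillen--Lichtenbaum conjecture (now a theorem by Voevodsky--Rost), identifies the torsion-free quotient of $H^{1}(C\codot)$ with the free part of $K_{1-2r}(\oh_{E})\ten\Zprm$. Any $\alpha$ satisfying the integrality condition of \S\ref{2.1} then lifts to an endomorphism $\widetilde{\alpha}$ of $C\codot$, and a character-by-character computation identifies the image of $\Det_{\Q[G]}(\alpha \oplus 1)$ in $\Qprm[G]$ with the ratio $\Det(\widetilde\alpha \mid C^{2}\ten\Qprm) / \Det(\widetilde\alpha \mid C^{1}\ten\Qprm)$.

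Second, the factor $\inv({\cal R}_{f_E^r}^{-1})$ must be matched to the appropriate $L$-value. The Stark conjecture of \S\ref{stark conjectures} places ${\cal R}_{f_E^r}$ in $\Q[G]\st$, while a Main-Conjecture-style identity --- namely the $\prm$-part of the equivariant Tamagawa number conjecture of Burns--Flach, or equivalently a suitable Iwasawa Main Conjecture in the cyclotomic $\Zprm$-tower of $F$ --- expresses the ratio of $\prm$-adic determinants above in terms of the leading $L$-value, up to a unit of $\Zprm[G]$. Combining these inputs should yield
\[ \Det_{\Q[G]}(\alpha \oplus 1)\cdot \inv({\cal R}_{f_E^r}^{-1}) \in \ann_{\Zprm[G]}\!\bigl(\tors\,\het^{1}(\spec\oh_{E,S},\Zprm(1-r))\bigr)^{-1}\cdot\fitt_{\Zprm[G]}\!\bigl(\het^{2}(\spec\oh_{E,S},\Zprm(1-r))\bigr). \]
Multiplying through by $\ann_{\Zprm[G]}(\tors\,\het^{1})$ then clears denominators, which is part (i), and places the product inside the first Fitting ideal of $\het^{2}$; since any first Fitting ideal is contained in the corresponding annihilator, part (ii) follows.

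The decisive step, and the one that forces the statement to remain a conjecture, is the comparison of archimedean and $\prm$-adic determinants invoked in the previous paragraph. The Stark conjecture controls ${\cal R}_{f_E^r}$ only up to a rational scalar, whereas the argument demands control up to a unit of $\Zprm[G]$. Bridging this is precisely the content of the equivariant Tamagawa number conjecture at $\prm$. A plausible route towards a more economical proof is to isolate the minimal Iwasawa-theoretic input actually required, thereby permitting unconditional results in those cases (for instance cyclotomic extensions of $\Q$, or abelian extensions of imaginary quadratic fields) where that input is known.
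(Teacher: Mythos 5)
The statement you were asked to prove is labelled \emph{Conjecture} \ref{5.2}, and the paper does not prove it — nor does it claim to. Section \ref{5.1} explicitly introduces it as ``a conjectural procedure,'' and \S\ref{5.3} (``Evidence'') offers only partial results and analogies: part (i) is analogous to Stickelberger integrality, known in various totally real cases at $r=0$ via Kubota--Leopoldt, Coates--Sinnott, Cassou-Nogu\`es, and Deligne--Ribet; part (ii) is an extension of the Coates--Sinnott annihilation conjecture to characters whose $L$-functions vanish at $s=r$, with particular cyclotomic instances established in \cite{snaith:stark} (Theorems 6.1 and 7.6 there) and, in the Iwasawa-theoretic example of \S\ref{commutative example}, tied to the Main Conjecture of Mazur--Wiles and Wiles. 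There is therefore no ``paper's own proof'' for your attempt to be compared against.

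To your credit, you recognise this in your final paragraph: the bridge between the archimedean regulator ${\cal R}_{f_E^r}$, controlled by Stark's conjecture only up to a rational scalar, and the $\prm$-adic control up to a unit of $\Zprm[G]$ that your Fitting-ideal argument needs, is precisely the $\prm$-part of the equivariant Tamagawa number conjecture (equivalently, a suitable equivariant Main Conjecture). So your ``proof'' is a heuristic derivation of Conjecture \ref{5.2} from a strictly stronger family of conjectures, not a proof of it. That is a legitimate and informative thing to write down, and it is consistent with the paper's own positioning of the conjecture within the hierarchy surveyed in \S\ref{sec iwasawa theory}. Two smaller points: the inclusion you want is $\fitt_0(M) \con \ann_R(M)$ (the zeroth Fitting ideal, not the ``first''); and your opening reduction to abelian $E/F$ is not really a reduction, since Conjecture \ref{5.2} is already formulated as a family of abelian-extension assertions indexed by the subgroups $H$ — the relabelling is harmless, but it does no logical work.
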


(We have adopted the shorthand: $\oh_{E^{[H,H]},S} = \oh_{E^{[H,H]},S(E^{[H,H]})}$.)

\subsection{Evidence} \label{5.3}

Part (i) of Conjecture \ref{5.2} is analogous to the Stickelberger integrality, which is described in \cite[Section 2.2]{snaith:stark}. Stickelberger integrality was proven in certain totally real cases in \cite{kl:padicl,cs:padic,cassou-nogues:valeurs,dr:abelianlfunctions}, for $\lint = 0$. In general, when $r=0$, it is part of the Brumer conjecture \cite{brumer:units}. The novelty of part (ii) of Conjecture \ref{5.2}, when it was introduced in \cite{snaith:rel} and \cite{snaith:stark}, was the annihilator prediction when the $L$-function vanishes at $s=r$. For the part of the fractional ideal corresponding to characters whose $L$-functions are non-zero at $s = \lint$, generated by the higher Stickelberger element at $s = \lint$, part (ii) is the conjecture of \cite{cs:stickel}.

Let us consider the cyclotomic example $\hJ{\lint}{L/\Q}$ ($\lint <0$) when $L = \Q(\zeta)$ for some root of unity $\zeta$, and suppose $\prm$ is an odd prime dividing the order of $\zeta$. In this case, $\hJ{\lint}{L/\Q}$ splits into plus and minus parts for complex conjugation, \ie
\[ \hJ{\lint}{L/\Q} = e_+^\lint \hJ{\lint}{L/\Q} \oplus e_-^\lint \hJ{\lint}{L/\Q} ,\]
where $e_+^\lint = \frac{1}{2}(1 + (-1)^\lint c)$, $e_-^\lint = \frac{1}{2}(1 - (-1)^\lint c)$ and $c \in G = \gal{L/\Q}$ is complex conjugation. By the proof of \cite[Theorem 6.1]{snaith:stark}, $e_-^\lint \hJ{\lint}{L/\Q}$ is generated by the Stickelberger element $\stick_{L/\Q,S}(\lint)$ defined in terms of $L$-function values at $s = \lint$. However, by \cite{dr:abelianlfunctions},
\[ \ann_{\Zprm[G]}(\tors (\het^1(\spec \oh_{L,S},\Zprm(1-\lint)))) \stick_{L/\Q,S}(\lint) \con \Zprm[G] .\]
Further, the proof of \cite[Theorem 7.6]{snaith:stark} shows that $e_+^\lint \hJ{\lint}{L/\Q} \con \Zprm[G]$. In fact, \cite[Theorem 6.1]{snaith:stark} also shows that part (ii) of Conjecture \ref{5.2} holds in this case (with $E = \Q$ and $H = G$), the intersection ``$\cap \Zprm[G]$'' found in the statement of that theorem being unnecessary.

Turning now to the case $\lint = 0$, with the field $\ef_n$ as in \S \ref{nat examples}, we have a similar scenario for $\iJ{\ef_n/\Q,S}$, where $S = \{\infty,\prm\}$. Indeed, we see from (\ref{recap full j desc}) that $\iJ{\ef_n/\Q,S}$ again splits into plus and minus parts, with the minus part being generated by the Stickelberger element $\stick_{\ef_n/\Q,S}$ defined at $s = 0$. Stickelberger's theorem then implies that
\[ \ann_{\Zprm[G_n]}(\rou{\ef_n}) e_- \iJ{\ef_n/\Q,S} \con \Zprm[G_n] ,\]
and $e_+ \iJ{\ef_n/\Q,S}$ is already in $\Zprm[G_n]$. The roles of the plus and minus parts of $\iJ{\ef_n/\Q,S}$ will become clear in \S \ref{commutative example} below.

\subsubsection{An Iwasawa-theoretic example} \label{commutative example}

(\ref{recap full j desc}) can be used to provide an example of the relationship of $\iJ{\ef_n/\Q,S}$ to Iwasawa theory, with an inverse limit of the $\iJ{\ef_n/\Q,S}$ over $n$ giving rise, in a suitable way, to Fitting ideals of both the plus and minus parts of an inverse limit of class-groups (Proposition \ref{iJ and fitt of class}). Given $n \geq 0$, let $\cycq{n}/\Q$ be the degree $\prm^n$ subextension of the (unique) $\Zprm$-extension $\cycq{\infty}$ of $\Q$. We then have the field diagram
\[ \xymatrix{
& \ef_n \ar@{-}[dl]_{\Delta_n} \ar@{-}[ddr]^{\Gamma_n} & \\
\cycq{n} \ar@{-}[ddr] & & \\
& & \ef_0 \ar@{-}[dl]^\Delta \\
& \Q &
} \]
in which $\cycq{n} \cap \ef_0 = \Q$ and $\cycq{n} \ef_0 = \ef_n$, so that the Galois group $G_n = \gal{\ef_n/\Q}$ is the internal direct product of $\Delta_n$ and $\Gamma_n$. $S$ will denote the set of places $\{\infty,\prm\}$ of $\Q$.

By virtue of the natural isomorphism $\Delta_n \ra \Delta$, characters of $\Delta_n$ correspond to characters of $\Delta$. If $\tsc \in \chgrp{\Delta}$, we let $\tsc_n$ denote the corresponding character in $\chgrp{\Delta}_n$. Now, the idea is to view the group-ring $\C[G_n]$ as $\C[\Gamma_n][\Delta_n]$. In doing this, we can define a projection $\pi_n(\tsc) : \C[G_n] \ra \C[\Gamma_n]$ by extending $\tsc_n$ linearly (over $\C[\Gamma_n]$).

Finally, fix an isomorphism $\nu : \Cprm \ra \C$ and let $\teich : \Delta \ra \C\st$ be the composition of the Teichm\"uller character $\Delta \ra \Cprm\st$ with $\nu : \Cprm\st \ra \C\st$. Then given $\tsc \in \chgrp{\Delta}$, $\tsc^*$ will denote $\teich \tsc^{-1}$. Observe that since $\teich$ is odd, $\tsc$ is even if and only if $\tsc^*$ is odd.

\begin{prop} \label{iJ and fitt of class}
Let $\tsc \in \chgrp{\Delta}$. ($\tsc$ may be even or odd.)
\[ \fitt_{\Zprm\pwr{\Gamma_\infty}}(e_{\tsc^*} \class_\infty) = \left\{
\begin{array}{ll}
\displaystyle{\proo{n}} \Zprm \pi_n(\tsc^*)(\cycJ) & \textrm{if $\tsc \not= 1$} \\
\displaystyle{\proo{n}} \Zprm \pi_n(\tsc^*)((1 - (1+\prm)\sigma_n^{-1}) \cycJ) & \textrm{if $\tsc = 1$}
\end{array}
\right. \]
where $\sigma_n = (1+\prm,\ef_n/\Q)$.
\end{prop}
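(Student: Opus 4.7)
My strategy is to decompose $\cycJ$ into its plus and minus parts using the explicit description (\ref{recap full j desc}), apply $\pi_n(\tsc^*)$ to extract the piece carrying the $\tsc^*$-component, and then feed the result into known Main Conjecture statements at each finite level before passing to the inverse limit. Since $\teich$ is odd, $\tsc^*=\teich\tsc^{-1}$ has parity opposite to that of $\tsc$, and since $e_{\tsc^*}e_{\pm}$ vanishes unless the parities match (and $\pi_n(\tsc^*)$ factors through the $\tsc^*$-isotypic projection of $\Q[G_n]$), formula (\ref{recap full j desc}) collapses to
\[ \pi_n(\tsc^*)(\cycJ) = \begin{cases} \pi_n(\tsc^*)(\Z[G_n]\stick_{\ef_n/\Q,S}) & \text{if } \tsc \text{ is even,} \\ \tfrac{1}{2}\pi_n(\tsc^*)(\ann_{\Z[G_n]}(\oh_{\ef_n^+,S}\st/\E_n^+)) & \text{if } \tsc \text{ is odd.} \end{cases} \]

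For the non-exceptional characters ($\tsc \neq 1$, equivalently $\tsc^*\neq\teich$), I would match the right-hand side with $\fitt_{\Zprm[\Gamma_n]}(e_{\tsc^*}\class(\oh_{\ef_n,S}))$ level by level. In the minus case ($\tsc$ even) this is Mazur--Wiles in its Fitting-ideal refinement, accessible either through Kurihara's Euler-system arguments or through Greither's equivariant Main Conjecture over $\Zprm[G_n]$. In the plus case ($\tsc$ odd) I would use the Main Conjecture for totally real fields together with an explicit comparison between $\oh_{\ef_n^+,S}\st/\E_n^+$ and the $S$-class group of $\ef_n^+$, observing that the $\tsc^*$-component of the relevant module is cyclic over $\Zprm[\Gamma_n]$ so that its annihilator agrees with its Fitting ideal. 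Passing to the inverse limit then yields the claimed equality, using that for the norm-compatible system of finitely generated $\Zprm[\Gamma_n]$-modules $\{e_{\tsc^*}\class(\oh_{\ef_n,S})\}_n$ the Fitting ideal of the limit in $\Zprm\pwr{\Gamma_\infty}$ is the projective limit of the Fitting ideals at each finite level.

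The exceptional case $\tsc = 1$ forces $\tsc^* = \teich$, where the Teichm\"uller component of the Iwasawa Stickelberger element carries the trivial zero coming from the pole of $L_{\prm}(s,\teich)$. The factor $1 - (1+\prm)\sigma_n^{-1}$, which lives in $\Zprm[\Gamma_n]$ and equals $1 - \kappa(\gamma_n)\gamma_n^{-1}$ for $\gamma_n$ the image of a topological generator of $\Gamma_\infty$ under the cyclotomic character $\kappa$, is precisely the standard regularizing twist that removes this trivial zero; after multiplying $\cycJ$ by it the argument of the preceding paragraph goes through verbatim. The main obstacle will be the careful matching of \emph{Fitting} (not merely \emph{characteristic}) ideals across the $\Delta$-isotypic decomposition -- especially on the plus side, where Fitting-ideal versions of the Main Conjecture require the cyclicity hypothesis to be checked -- together with the bookkeeping to ensure the inverse limit of Fitting ideals really does compute $\fitt_{\Zprm\pwr{\Gamma_\infty}}(e_{\tsc^*}\class_\infty)$; the exceptional case $\tsc = 1$ then follows a well-known pattern once the regularizing factor is in place.
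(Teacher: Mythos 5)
Your approach is essentially the same as the paper's: both decompose $\cycJ$ via (\ref{recap full j desc}), use $\pi_n(\tsc^*)$ to isolate the Stickelberger part (for $\tsc$ even) or the cyclotomic-unit annihilator part (for $\tsc$ odd), pass from annihilator to Fitting ideal using cyclicity of $G_n$, then apply Mazur--Wiles/Wiles on the minus side and the Cornacchia--Greither identification $\fitt_{\Zprm[G_n]}((\oh_{\ef_n^+,S}\st/\E_n^+) \teno{\Z} \Zprm) = \fitt_{\Zprm[G_n]}(\class(\ef_n^+) \teno{\Z} \Zprm)$ on the plus side, before passing to the inverse limit and treating $\tsc = 1$ via the regularizing factor. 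The only cosmetic difference is that you propose to invoke an equivariant Main Conjecture over $\Zprm[G_n]$ level by level on the minus side, whereas the paper first uses Iwasawa's construction of $\prm$-adic $L$-functions to pass to $\Lambda$ and then quotes Mazur--Wiles once at the $\Lambda$-level.
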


\begin{proof}
This stems from (\ref{recap full j desc}), which we reproduce for convenience:
\[ \iJ{\ef_n/\Q,S} = \frac{1}{2} e_+ \ann_{\Z[G_n]}(\oh_{\ef_n^+,S}\st/\E_n^+) \oplus \Z[G_n]\stick_{\ef_n/\Q,S} .\]

Let us deal with even characters $\tsc \in \chgrp{\Delta}$ first. For simplicity, we will assume that $\tsc \not= 1$, though in fact the case $\tsc = 1$ is similar. (\ref{recap full j desc}) tells us that for each $n \geq 0$, $\Zprm \pi_n(\tsc^*)(\cycJ) = \Zprm[\Gamma_n]\pi_n(\tsc^*)(\stick_{\ef_n/\Q,S})$. However, Iwasawa's construction of $\prm$-adic $L$-functions (see \cite{iwasawa:padicl} and \cite[Chapter 7]{wash:cyc}) shows that this lies in $\Zprm[\Gamma_n]$ and that the inverse limit of these ideals is generated by the algebraic $\prm$-adic $L$-function corresponding to the even character $\tsc$. Mazur and Wiles' proof (see \cite{mw:classfields}) of the Main Conjecture of Iwasawa theory, and later Wiles' generalization of this (see \cite{wiles:iwasawa}), show that this in turn is equal to the Fitting ideal appearing in the statement of the proposition.

Now we turn to odd characters $\tsc \in \chgrp{\Delta}$. Referring to (\ref{recap full j desc}) again, we find that
\[ \Zprm \pi_n(\tsc^*)(\cycJ) = \pi_n(\tsc^*)(\fitt_{\Zprm[G_n]}((\oh_{\ef_n^+,S}\st/\E_n^+) \teno{\Z} \Zprm)) .\]
This uses that $(\oh_{\ef_n^+,S}\st/\E_n^+) \teno{\Z} \Zprm$ is cocyclic as a $\Zprm[G_n]$-module so that, since $G_n$ is cyclic, the Fitting and annihilator ideals of $(\oh_{\ef_n^+,S}\st/\E_n^+) \teno{\Z} \Zprm$ agree. \cite[Theorem 1]{cg:fitting} says in particular that this Fitting ideal is equal to that of $\class(\ef_n^+) \teno{\Z} \Zprm$. Combining the above and passing to limits completes the proof.
\end{proof}

We observe the importance here of taking leading coefficients of $L$-functions at $s = 0$ rather than just values. For $\tsc$ even (\ie $\tsc^*$ odd), $\pi_n(\tsc^*)(\cycJ)$ concerns $L$-functions which are non-zero at $0$, and we get the usual Stickelberger elements which are related to \emph{minus} parts of class-groups via $\prm$-adic $L$-functions. However when $\tsc$ is odd (\ie $\tsc^*$ is even), $\pi_n(\tsc^*)(\cycJ)$ is concerned with $L$-functions having simple zeroes at $0$, which are related to \emph{plus} parts of class-groups via cyclotomic units.

\section{ ${\cal J}_{\ef/\gf}^{r}$ and annihilation} \label{5.4}

Let $\prm$ be an odd prime. Given $\alpha \in {\cal J}_{\ef/\gf}^{r}$ and $H \subseteq G = {\rm Gal}(\ef/\gf)$, choose any
\[   \beta \in   {\rm ann}_{{\mathbb Z}_{\prm}[H\ab]}( {\rm Tors} H_{{\rm \acute{e}t}}^{1}(
{\rm Spec}({\cal O}_{E^{[H,H]},S}) , {\mathbb Z}_{\prm}(1-r)))  .   \]
Then the $H$-component $B_{G}^{*}(\alpha)_{H}$ lies in ${\mathbb Q}_{\prm}[H\ab]^{N_{G}H}$, the fixed points under the conjugation action by $N_{G}H$, the normalizer of $H$ in $G$. Assuming Conjecture \ref{5.2}(i), $   B_{G}^{*}(\alpha)_{H} \cdot  \beta \in {\mathbb Z}_{\prm}[H\ab]^{N_{G}H}$. Choose $z_{H, \alpha, \beta}   \in {\mathbb Z}_{\prm}[H]$ such that
\[  \pi( z_{H, \alpha, \beta} ) =   B_{G}^{*}(\alpha)_{H} \cdot  \beta .  \]

Consider the composition
\[ \begin{array}{l}
H_{{\rm \acute{e}t}}^{2}({\rm Spec}({\cal O}_{E,S(E)}) , {\mathbb Z}_{\prm}(1-r)) \stackrel{ {\rm Tr}_{E/E^{[H,H]} }}{\ra} H_{{\rm \acute{e}t}}^{2}(
{\rm Spec}({\cal O}_{E^{[H,H]},S}) , {\mathbb Z}_{\prm}(1-r))   \\
\hspace{40pt}   \stackrel{ B_{G}^{*}(\alpha)_{H} \cdot  \beta }{\ra}
H_{{\rm \acute{e}t}}^{2}(
{\rm Spec}({\cal O}_{E^{[H,H]},S}) , {\mathbb Z}_{\prm}(1-r))  \\
\hspace{80pt}   \stackrel{ j }{\ra}
H_{{\rm \acute{e}t}}^{2}(
{\rm Spec}({\cal O}_{E,S(E)}) , {\mathbb Z}_{\prm}(1-r))
\end{array} \]
in which $j$ is induced by the inclusion of fields and ${\rm Tr}_{E/E^{[H,H]} }$ denotes the transfer homomorphism.

Assuming Conjecture \ref{5.2}(ii), this composition is zero. However, by Frobenius reciprocity for the cohomology transfer, for all $a \in H_{{\rm \acute{e}t}}^{2}({\rm Spec}({\cal O}_{E,S(E)}) , {\mathbb Z}_{\prm}(1-r))$
\[  \begin{array}{ll}
0    &  = j( \pi(z_{H, \alpha, \beta}) {\rm Tr}_{E/E^{[H,H]} }(a) ) \\
&  =  j \cdot {\rm Tr}_{E/E^{[H,H]} } ( z_{H, \alpha, \beta} \cdot a)  \\
&  =  (\sum_{h \in  {\rm Gal}(E/E^{[H,H]} )}  \  h  ) z_{H, \alpha, \beta} \cdot a  .
\end{array} \]

\begin{definition} \label{5.5}
In the situation of \S\ref{5.1} and \S\ref{5.4}, let $\ncJ{\ef/\gf,r} \subseteq {\mathbb Z}_{\prm}[G]$ denote the left ideal generated by the elements $ (\sum_{h \in  {\rm Gal}(E/E^{[H,H]} )}  \  h  ) z_{H, \alpha, \beta}$ as $\alpha$, $H$ and $\beta$ vary through all the possibilities above.
\end{definition}

\begin{theorem} \label{5.6}
If Conjecture \ref{5.2} is true for all abelian intermediate extensions $E^{[H,H]}/E^{H}$ of $\ef/\gf$ then  the left action of the left ideal $\ncJ{\ef/\gf,r}$ annihilates
\[    H_{{\rm \acute{e}t}}^{2}(
{\rm Spec}({\cal O}_{E,S(E)}) , {\mathbb Z}_{\prm}(1-r))  .   \]
\end{theorem}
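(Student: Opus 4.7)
The plan is to show that every generator of $\ncJ{\ef/\gf,r}$ annihilates $\het^{2}(\spec(\oh_{E,S(E)}),\Zprm(1-r))$; since a left ideal annihilates a module as soon as a generating set does, this suffices. So fix $\alpha \in {\cal J}_{\ef/\gf}^r$, a subgroup $H \subseteq G$, a class $\beta \in \ann_{\Zprm[H\ab]}(\tors \het^{1}(\spec(\oh_{E^{[H,H]},S}),\Zprm(1-r)))$, and a lift $z_{H,\alpha,\beta} \in \Zprm[H]$ of $B_G^*(\alpha)_H \cdot \beta$; the product is integral, hence admits such a lift, precisely by Conjecture \ref{5.2}(i).

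The first step is to check that the three-stage composition written in \S\ref{5.4} is the zero map on $\het^{2}(\spec(\oh_{E,S(E)}),\Zprm(1-r))$. Since $\pi(z_{H,\alpha,\beta}) = B_G^*(\alpha)_H \cdot \beta$ is a class in $\Zprm[H\ab]$, it acts on $\het^{2}(\spec(\oh_{E^{[H,H]},S}),\Zprm(1-r))$, and Conjecture \ref{5.2}(ii) applied to the abelian extension $E^{[H,H]}/E^H$ asserts exactly that this action is zero. Composing with $\mathrm{Tr}_{E/E^{[H,H]}}$ on the left and the pull-back $j$ on the right therefore still gives zero.

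The second step is the projection formula for the \'etale transfer: for $c \in \Zprm[H]$ and any class $a$ over $E$,
\[ \mathrm{Tr}_{E/E^{[H,H]}}(c \cdot a) = \pi(c) \cdot \mathrm{Tr}_{E/E^{[H,H]}}(a), \]
where $\pi \colon \Zprm[H] \to \Zprm[H\ab]$ is the canonical surjection; this is meaningful because $[H,H] = \gal{E/E^{[H,H]}}$ acts trivially on the target cohomology. Applying this with $c = z_{H,\alpha,\beta}$ and combining with the first step gives $j \circ \mathrm{Tr}_{E/E^{[H,H]}}(z_{H,\alpha,\beta} \cdot a) = 0$ for every such $a$. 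Finally, the composite $j \circ \mathrm{Tr}_{E/E^{[H,H]}}$ on $\het^{2}(\spec(\oh_{E,S(E)}),\Zprm(1-r))$ is, by the standard restriction--corestriction identity, left multiplication by the norm element $\sum_{h \in \gal{E/E^{[H,H]}}} h$. This yields $\big(\sum_{h} h\big) z_{H,\alpha,\beta} \cdot a = 0$, which is the required annihilation of the corresponding generator of $\ncJ{\ef/\gf,r}$.

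The only obstacle is bookkeeping: one has to invoke the projection formula and the restriction--corestriction identity for \'etale cohomology with coefficients in $\Zprm(1-r)$, and to verify that the lift $z_{H,\alpha,\beta}$ can indeed be chosen in $\Zprm[H]$ (which is Conjecture \ref{5.2}(i)). All of these are standard, so the proof reduces to the explicit chain of equalities already displayed at the end of \S\ref{5.4}.
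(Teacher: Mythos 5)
Your proof is correct and follows essentially the same argument laid out in \S\ref{5.4} of the paper: fix generator data $(\alpha, H, \beta)$, use Conjecture \ref{5.2}(i) to obtain the integral lift $z_{H,\alpha,\beta}$, observe that Conjecture \ref{5.2}(ii) kills the three-stage composition, and then unwind via the projection formula and the restriction–corestriction (norm) identity for the \'etale transfer to conclude that $\bigl(\sum_{h} h\bigr) z_{H,\alpha,\beta}$ annihilates $H^2_{\mathrm{\acute{e}t}}(\mathrm{Spec}(\mathcal{O}_{E,S(E)}),\mathbb{Z}_{\ell}(1-r))$. The only cosmetic difference is that you name the projection formula and restriction–corestriction separately where the paper folds them into a single ``Frobenius reciprocity'' step.
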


\begin{remark} \label{5.7}
If $G$ is abelian in Definition \ref{5.5} and Theorem \ref{5.6}, then
\[   \ncJ{\ef/\gf,r} =    {\cal J}_{\ef/\gf}^{r} \cdot    {\rm ann}_{{\mathbb Z}_{\prm}[G]}( {\rm Tors} H_{{\rm \acute{e}t}}^{1}(
{\rm Spec}({\cal O}_{E, S(E)}) , {\mathbb Z}_{\prm}(1-r))) . \]
That is, $  \ncJ{\ef/\gf,r}$ equals the left hand side of Conjecture \ref{5.2}(ii).
\end{remark}

\begin{prop} \label{intJ is two-sided}
In Definition \ref{5.5}, $\ncJ{\ef/\gf,r}$ is a two-sided ideal in $\Zcop[G]$.
\end{prop}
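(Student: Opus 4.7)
The plan is to reduce the two-sidedness to checking right-multiplication by elements of $G$, and then to exploit the $G$-invariance built into the definition of $B_G^*$ together with naturality of \'etale cohomology under the conjugation action of $G$. Since $\Zcop[G]$ is generated over $\Zcop$ by $G$, and since $\ncJ{\ef/\gf,r}$ is already a left ideal and a $\Zcop$-module, it suffices to show that for each generator $\gamma = N_{[H,H]} z_{H,\alpha,\beta}$, where $N_{[H,H]} = \sum_{h \in \gal{E/E^{[H,H]}}} h$, and each $g \in G$, we have $\gamma g \in \ncJ{\ef/\gf,r}$.

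First I would rewrite $\gamma g$ by conjugating the sum past $z_{H,\alpha,\beta}$ and $g$: setting $H' = g^{-1} H g$, one has $g^{-1}[H,H]g = [H',H']$, so
\[
\gamma g = N_{[H,H]}\, z_{H,\alpha,\beta}\, g = g \cdot N_{[H',H']} \cdot (g^{-1} z_{H,\alpha,\beta} g).
\]
Because $\ncJ{\ef/\gf,r}$ is a left ideal, it then suffices to show that $N_{[H',H']}\cdot(g^{-1} z_{H,\alpha,\beta} g)$ is itself a generator of $\ncJ{\ef/\gf,r}$, i.e. that $g^{-1} z_{H,\alpha,\beta} g$ is an admissible choice of $z_{H',\alpha',\beta'}$ for some $\alpha' \in \iJ{\ef/\gf}^r$ and $\beta' \in \ann_{\Zcop[H'\ab]}(\tors\het^1(\spec\oh_{E^{[H',H']},S},\Zcop(1-r)))$.

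I would take $\alpha' = \alpha$ and $\beta' = c_{g^{-1}}(\beta)$, where $c_{g^{-1}}$ denotes the isomorphism $\Zcop[H\ab] \ra \Zcop[H'\ab]$ induced by conjugation by $g^{-1}$. Writing down the commutative square
\[
\xymatrix{
\Zcop[H] \ar[r]^{\pi} \ar[d]_{c_{g^{-1}}} & \Zcop[H\ab] \ar[d]^{c_{g^{-1}}} \\
\Zcop[H'] \ar[r]^{\pi'} & \Zcop[H'\ab]
}
\]
and applying $\pi'$ to $g^{-1} z_{H,\alpha,\beta} g$, one gets $c_{g^{-1}}(B_G^*(\alpha)_H \cdot \beta) = c_{g^{-1}}(B_G^*(\alpha)_H)\cdot c_{g^{-1}}(\beta)$. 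The defining $G$-invariance of $B_G^*(\alpha) \in (\oplus_K\Hom_{\Omega_\Q}(R(K\ab),\br\Q))^G$ gives $c_{g^{-1}}(B_G^*(\alpha)_H) = B_G^*(\alpha)_{H'}$, whence $\pi'(g^{-1} z_{H,\alpha,\beta} g) = B_G^*(\alpha)_{H'}\cdot \beta'$, as required.

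It remains to check that $\beta'$ really annihilates the relevant \'etale cohomology. This is functoriality: the action of $g^{-1} \in G$ as an automorphism of $E/F$ restricts to an isomorphism of schemes $\spec\oh_{E^{[H',H']},S} \ra \spec\oh_{E^{[H,H]},S}$, inducing an isomorphism of \'etale cohomology groups which intertwines the $\Zcop[H\ab]$-action on the source with the $\Zcop[H'\ab]$-action on the target via $c_{g^{-1}}$; hence $\beta$ annihilates one torsion group if and only if $c_{g^{-1}}(\beta)$ annihilates the other. With this observation in hand the proof is complete. I expect no real obstacle beyond careful bookkeeping of the conjugation actions; the only substantive input is the $G$-invariance of $B_G^*(\alpha)$ recorded in \S\ref{3.3}, which is exactly the right tool to turn a left-ideal generator into another left-ideal generator after right-multiplication by $G$.
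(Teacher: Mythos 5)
Your proof is correct and takes essentially the same route as the paper's: both reduce two-sidedness to conjugating a generator $N_{[H,H]}\,z_{H,\alpha,\beta}$ by an element of $G$, then identify the conjugate as a generator indexed by the conjugate subgroup, using the $G$-invariance of $B_G^*(\alpha)$ recorded in \S\ref{3.3} and the functoriality of \'etale cohomology to move $\beta$ to the conjugate annihilator. The only cosmetic difference is that you conjugate by $g^{-1}$ and explicitly spell out the reduction $\gamma g = g\,(g^{-1}\gamma g)$ using the left-ideal property, whereas the paper conjugates by $w$ and leaves this reduction implicit.
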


\begin{proof}
In the notation of \S\ref{5.4}, it suffices to show that
\[ w \left(\sum_{h \in \gal{E/E^{\comm{H}}}} h\right) z_{H,\alpha,\beta} w^{-1} \]
lies in $\ncJ{\ef/\gf,r}$. Consider
\[ w \left(\sum_{h \in \gal{E,E^{\comm{H}}}} h \right) w^{-1} = \sum_{h \in \gal{E/E^{\comm{wHw^{-1}}}}} h \]
and $w z_{H,\alpha,\beta} w^{-1}$. Since $z_{H,\alpha,\beta}$ lies in $\Zcop[H]$ and maps to $B_G^*(\alpha) \beta$ in $\Zcop[H\ab]$, we see that $wz_{H,\alpha,\beta} w^{-1}$ lies in $\Zcop[wHw^{-1}]$ and maps to $wB_G^*(\alpha)_H w^{-1} w \beta w^{-1}$ in $\Zcop[H\ab]$. However, $w B_G^*(\alpha)_H w^{-1} = B_G^*(\alpha)_{wHw^{-1}}$ and $w\beta w^{-1}$ lies in
\[ \ann_{\Zcop[(wHw^{-1})\ab]}(\tors \het^1(\spec(\oh_{E^{\comm{wHw^{-1}}},S}),\Zcop(1-r))) ,\]
completing the proof.
\end{proof}

\begin{prop} \label{nc quotient}
Suppose that $F \con K \con E$ is a tower of number fields with $\ef/\gf$ and $K/F$ Galois. Then for $r = -1,-2,-3,\ldots$, the canonical homomorphism $\pi_{E/K} : \Zcop[\gal{\ef/\gf}] \ra \Zcop[\gal{K/F}]$ satisfies
\[ \pi_{E/K}(\ncJ{\ef/\gf,r}) \con \ncJ{K/F,r} .\]
\end{prop}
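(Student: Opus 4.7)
Let $G = \gal{\ef/\gf}$ and $N = \gal{E/K}$, which is normal in $G$ because $K/F$ is Galois, so that $G/N = \gal{K/F}$. I would show that each generator
\[ \gamma = \Big(\sum_{h \in \gal{E/E^{\comm{H}}}} h\Big) z_{H,\alpha,\beta} \]
of $\ncJ{\ef/\gf,r}$ (with $H \con G$, $\alpha \in {\cal J}_{\ef/\gf}^{r}$, and $\beta$ an admissible annihilator) maps under $\pi_{E/K}$ into $\ncJ{K/F,r}$; $\Zcop$-linearity then extends the inclusion to the whole left ideal.

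For each $H \con G$, set $J := HN/N \con G/N$. The natural surjection $H \twoheadrightarrow J$ induces $\bar{\pi} : H\ab \twoheadrightarrow J\ab$ fitting into a commutative square relating $\pi_{E/K}|_{\Zcop[H]}$ with the abelianisation quotients $\Zcop[H]\twoheadrightarrow\Zcop[H\ab]$ and $\Zcop[J]\twoheadrightarrow\Zcop[J\ab]$. Since $\comm{J} = \comm{H}N/N$ and the fibres of $\comm{H} \twoheadrightarrow \comm{J}$ all have cardinality $|\comm{H} \cap N|$,
\[ \pi_{E/K}\Big(\sum_{h \in \comm{H}} h\Big) = |\comm{H} \cap N| \sum_{j \in \comm{J}} j . \]

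The principal case is $H \supseteq N$, for which $H = \pi^{-1}(J)$, $K^{J} = E^{H}$, and $K^{\comm{J}} = E^{\comm{H}} \cap K$. Three inputs combine here: (i) Proposition \ref{4.4} gives $\alpha' := \pi_{E/K}(\alpha) \in {\cal J}_{K/F}^{r}$; (ii) dualising Lemma \ref{3.2} and transporting via Proposition \ref{1.5} yields the identity $B_{G/N}^{*}(\alpha')_{J} = \bar{\pi}(B_{G}^{*}(\alpha)_{H})$ in $\Q_\prm[J\ab]$; (iii) functoriality of \'etale cohomology for the tower $K^{\comm{J}} \con E^{\comm{H}}$ (Galois of group $N/(N \cap \comm{H})$), together with the injectivity of the restriction map on $\het^{1}$-torsion that is available for $r \leq -1$, shows that $\bar{\pi}(\beta)$ annihilates $\tors \het^{1}(\spec \oh_{K^{\comm{J}},S},\Zcop(1-r))$ as a $\Zcop[J\ab]$-module. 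Combining these, $z' := \pi_{E/K}(z_{H,\alpha,\beta})$ projects in $\Zcop[J\ab]$ to $B_{G/N}^{*}(\alpha')_{J} \cdot \bar{\pi}(\beta)$, so $|\comm{H} \cap N| z'$ is a valid lift for the triple $(J,\alpha', |\comm{H} \cap N| \bar{\pi}(\beta))$ and $\pi_{E/K}(\gamma)$ is itself a generator of $\ncJ{K/F,r}$.

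For the residual case $H \not\supseteq N$ I would set $H' := HN$, so that $H' \supseteq N$ and $H'/N = J$, and reduce to the principal case by exhibiting $\pi_{E/K}(\gamma)$ inside the ideal generated by the $H'$-generators of $\ncJ{K/F,r}$. The relevant tools are Proposition \ref{2.4} (quotient, applied to $H' \twoheadrightarrow J$) and Proposition \ref{2.10} (corestriction, applied to $H \hookrightarrow H'$), threaded through the Brauer-induction bookkeeping of Lemma \ref{3.2}. The main obstacle I anticipate is precisely this latter case, together with a careful verification of the cohomological functoriality in step (iii); once these are handled the principal case is essentially formal.
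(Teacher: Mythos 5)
Your handling of the principal case $H \supseteq N$ is correct and is essentially what the paper's terse citation of Lemma~\ref{3.2} and Proposition~\ref{4.4} is gesturing at: the dualised commutative square from Lemma~\ref{3.2} identifies $B_{G/N}^*(\pi_{E/K}(\alpha))_J$ with the image of $B_G^*(\alpha)_{\pi^{-1}(J)}$ under $\Q_\prm[\pi^{-1}(J)\ab] \to \Q_\prm[J\ab]$, Proposition~\ref{4.4} ensures $\pi_{E/K}(\alpha)$ is admissible, and your observation that $\sigma_J z_{J,\alpha',\beta'}$ depends only on the class of the lift modulo the kernel of $\Zcop[J] \twoheadrightarrow \Zcop[J\ab]$ (so the scalar $|\comm{H}\cap N|$ can be absorbed into $\beta'$) closes the loop. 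Your step (iii) is also the right reason why $\bar{\pi}(\beta)$ is admissible: for $r \leq -1$ the torsion of $\het^1$ identifies with $H^0(\cdot, \Qprm/\Zprm(1-r))$, the restriction map is injective, and the $\Zcop[J\ab]$-action on the source is the restriction via $\bar{\pi}$ of the $\Zcop[H\ab]$-action.

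However, you are right to be uneasy about the residual case $H \not\supseteq N$, and your sketch there does not close the gap. The difficulty is structural: $\pi_{E/K}$ sends the generator $(\sum_{h\in\comm{H}}h)z_{H,\alpha,\beta}$ to $|\comm{H}\cap N|\,\sigma_J\,\pi_{E/K}(z_{H,\alpha,\beta})$ where $J = HN/N$, but the image of $\pi_{E/K}(z_{H,\alpha,\beta})$ in $\Zcop[J\ab]$ is $\bar{\pi}(B_G^*(\alpha)_H)\,\bar{\pi}(\beta)$ with $\bar{\pi}$ induced by $H \twoheadrightarrow J$ --- and this involves the $H$-component of $B_G^*(\alpha)$, living in $\hJ{r}{E^{\comm{H}}/E^H}$, whereas the $J$-generators of $\ncJ{K/F,r}$ are governed by the $\pi^{-1}(J)=HN$-component $B_G^*(\alpha)_{HN}$ via the \emph{other} quotient $(HN)\ab \to J\ab$ from Lemma~\ref{3.2}. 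These two elements of $\Q_\prm[J\ab]$ come from genuinely different intermediate abelian extensions ($E^{\comm{H}}/E^H$ versus $E^{\comm{HN}}/E^{HN}$) and there is no visible reason for one to be a $\Zcop[J\ab]$-multiple of the other. Invoking Proposition~\ref{2.10} (corestriction) as you suggest runs the wrong way: it gives information about $\iota$ applied to $\hJ{r}{L/F}$, not a comparison of the $H$- and $HN$-slots of $B_G^*(\alpha)$ after pushing to $J\ab$. So the reduction from $H$ to $H'=HN$ is not simply a bookkeeping exercise, and until this step is filled in the argument is incomplete. (It is worth noting that the paper's own proof of this proposition is the one-line assertion that it ``follows easily'' from Lemma~\ref{3.2} and Proposition~\ref{4.4}, which really only speaks to the principal case; you have put your finger on exactly the point that assertion glosses over.)
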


\begin{proof}
This follows easily from Lemma \ref{3.2} and Proposition \ref{4.4}.
\end{proof}

\section{Relation to Iwasawa theory} \label{sec iwasawa theory}

As discussed in the Introduction, the motivation for examining the behaviour of the fractional Galois ideal under changes of extension is to set up investigating a possible role in Iwasawa theory. Via the relationship of the fractional ideal with Stark-type elements (\eg cyclotomic units in the case $\lint = 0$ and Beilinson elements in the case $\lint < 0$, discussed in \cite{buckingham:frac} and \cite{snaith:rel} resp.), one might hope that an approach involving Euler systems would be fruitful here. A general connection of the fractional Galois ideal to Stark elements of arbitrary rank was demonstrated in \cite{buckingham:phd}, and the link of Stark elements with class-groups using the theory of Euler systems is discussed in \cite{rubin:kolyvagin,popescu:rubin}, so that a strategy as above would seem promising.

We conclude the paper with some speculation concerning what the non-commutative Iwasawa theory of Fukaya--Kato \cite{fk:noncomm}, Kato \cite{kato:heisenberg} and Ritter--Weiss \cite{rw:teitseries} suggests about $\hJ{\lint}{\ef/\gf}$ of Definition \ref{4.5} and $\ncJ{\ef/\gf,\lint}$ of Definition \ref{5.5}.

It is worth pointing out, before we begin the recapitulation proper, that \cite{fk:noncomm,kato:heisenberg,rw:teitseries} often restrict to the situation where the extension fields are totally real, which tends to involve only one of the eigenspaces of complex conjugation acting on $\hJ{\lint}{\ef/\gf}$ and $\ncJ{\ef/\gf,\lint}$. We have tried to give some examples (for example, \S \ref{commutative example}) which illustrate the expected role and properties of the other eigenspace.

Further, in this area there is an immense litany of conjectures (see \cite{fk:noncomm,burns:leading}) of which Stark's conjecture is approximately the weakest. All the constructions we have made are contingent \emph{only} on the truth of Stark's conjecture, which is crucial for us but also seems fundamental; it is assumed, for example, in \cite{rw:lrnc}.

Following \cite{kato:heisenberg}, let $\prm$ be an odd prime (denoted $p$ there), $F$ a totally real number field and $F_\infty$ a totally real Lie extension of $F$ containing $\Q(\zeta_{\prm^\infty})^+$. Here, $\Q(\zeta_{\prm^\infty})^+$ is the union of the totally real fields $\Q(\zeta_{\prm^n})^+ = \Q(\zeta_{\prm^n} + \zeta_{\prm^n}^{-1})$ over all $n \geq 1$. Let $G = \gal{F_\infty/F}$, and assume that only finitely many primes of $F$ ramify in $F_\infty$. Fix a finite set $\Sigma$ of primes of $F$ containing the ones which ramify in $F_\infty/F$. Define $\Lambda(G)$ to be the Iwasawa algebra of $G$, given by $\Lambda(G) = \Zprm\pwr{G} = \proo{U} \Zprm[G/U]$, where the limit runs over all open normal subgroups of $G$.

Let $C$ denote the cochain complex of $\Lambda(G)$-modules given by
\[ {\rm RHom}( \rm{R} \Gamma_{\acute{e}t}( {\cal O}_{F_{\infty}}[1/ \Sigma] , {\mathbb
Q}_{\prm}/{\mathbb Z}_{\prm}) , {\mathbb Q}_{\prm}/{\mathbb Z}_{\prm}) ,\]
so that $H^{0}(C) =
{\mathbb Z}_{\prm}$ with trivial $G$-action and $H^{-1}(C) = {\rm Gal}(M/
F_{\infty})$, the Galois group of the maximal pro-$\prm$ abelian extension of
$F_{\infty}$ unramified  outside $\Sigma$. The other $H^{i}(C)$'s are zero and
$ {\rm Gal}(M/ F_{\infty})$ is a finitely generated torsion (left)
$\Lambda(G)$-module. Let $F^{cyc} \subseteq F_{\infty}$ denote the cyclotomic
${\mathbb Z}_{\prm}$-extension and set $H = {\rm Gal}(  F_{\infty} / F^{cyc})
\subseteq G$ so that $G/H \cong {\mathbb Z}_{\prm}$. As in \cite{cfksv:main}, let
\[ S = \{f \in \Lambda(G) \sat \textrm{$\Lambda(G)/\Lambda(G)f$ is finitely generated as a $\Lambda(H)$-module}\} .\]
Then $S$ is an Ore set, which means that its elements may be inverted to form
the localized ring
$\Lambda(G)_{S}$, and there is an exact localization sequence of algebraic
K-groups
\[   K_{1}(\Lambda(G))  \ra   K_{1}(\Lambda(G)_{S})
\stackrel{\partial}{\ra}   K_{0}(\Lambda(G) ,  \Lambda(G)_{S} )
\ra   K_{0}(\Lambda(G))  \ra
 K_{0}(\Lambda(G)_{S})     .   \]
By \cite{hs:pseudonullity}, Iwasawa's conjecture concerning the vanishing of the
$\mu$-invariant implies that the cohomology of the perfect complex $C$ vanishes
when $S$-localized. This gives rise to a class $[C] \in   K_{0}(\Lambda(G) ,
\Lambda(G)_{S} )$. In the case of finite Galois extensions the class
$[C] $ accounts for the Stickelberger phenomena (c.f. \cite{snaith:stark}) but on the
other hand so do values of Artin $L$-functions. The main conjecture of
non-commutative Iwasawa theory, described below following
\cite{kato:heisenberg}, makes this relation clear in terms of
$\Lambda(G)_{S}$-modules.

There is an $\prm$-adic determinantal valuation which assigns to  $f \in
K_{1}(\Lambda(G)_{S}) $
and a continuous Artin representation $\rho$ a value $f(\rho) \in
\overline{{\mathbb Q}}_\prm  \cup \{\infty\} $. The main conjecture of
non-commutative Iwasawa theory asserts that there exists
 $\xi \in  K_{1}(\Lambda(G)_{S}) $ such that (i) $\partial(\xi) = -[C]$ and (ii)
$\xi(\rho \kappa^{r}) =
 L_{\Sigma}( 1-r , \rho)$ for any even $r \geq 2$ where $\kappa$ is the $\prm$-adic
cyclotomic character and $L_{\Sigma}(s, \rho)$ is the Artin $L$-function of
$\rho$ with the Euler factors at  $\Sigma$ removed.

The main conjecture of Iwasawa theory was formulated in \cite{rw:lrnc} and studied in the series of papers \cite{rw:teitseries} when the Lie group $G$ has rank zero or
one. The case of $G = GL_{2}(\mathbb Z_\prm)$ is of particular interest in the
study of elliptic curves $E/{\mathbb Q}$ without complex multiplication \cite{cfksv:main} and is proven for the $\prm$-adic Heisenberg group in \cite{kato:heisenberg}. For a comprehensive survey see \cite{fk:noncomm}.

Motivated by the main conjecture of Iwasawa theory, and more generally by the
role of $\Lambda(G)$ in the arithmetic geometry of elliptic curves and their
Selmer groups, there has been considerable ring-theoretic activity concerning
$\Lambda(G)$ and $\Omega(G) = \Lambda(G)/\prm \Lambda(G)$ (see \cite{ab:ringtheoretic,ab:primeness,awz:reflexive,venjakob:structuretheory,venjakob:weierstrass,venjakob:padiclie}). The
rings $\Lambda(G)$ and
$\Omega(G)$ are examples of ``just-infinite rings'' which both satisfy the
Auslander--Gorenstein condition and are thus amenable to Lie theoretic analysis.

In the survey article \cite{ab:ringtheoretic}, a number of questions are posed. In particular the constructions of \S7 are directly related to \cite[Question G]{ab:ringtheoretic}: ``Is there a mechanism for constructing ideals of Iwasawa algebras which involves neither central elements nor closed normal subgroups?''

\begin{prop} \label{limit of ncJ exists}
If $F_{\infty}/F$ is any $\prm$-adic Lie extension of a number field $F$ with
Galois group $G$ then, under the assumption of \S \ref{5.4} for the finite intermediate
subextensions $E/F$ for $r=-1, -2, -3, \ldots$ we may define a two-sided ideal
\[ \ncJ{F_{\infty}/F , r} = \proo{E} \ncJ{E/F,r} \]
in $\Lambda(G)$, where the limit is taken over finite Galois subextensions $E/F$ of $F_{\infty}/F$.
\end{prop}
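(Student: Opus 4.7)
The plan is to construct $\ncJ{F_\infty/F,r}$ as the inverse limit of an inverse system of two-sided ideals, using the two key properties already established: functoriality under quotient maps (Proposition \ref{nc quotient}) and the two-sidedness at each finite level (Proposition \ref{intJ is two-sided}).

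First I would set up the inverse system. The finite Galois subextensions $E/F$ of $F_\infty/F$ are partially ordered by inclusion, and the canonical maps $\pi_{E'/E} : \Zcop[\gal{E'/F}] \to \Zcop[\gal{E/F}]$ for $E \con E' \con F_\infty$ form an inverse system of rings whose limit is $\Lambda(G)$ by definition of the Iwasawa algebra. The key input is that, by Proposition \ref{nc quotient}, each $\pi_{E'/E}$ carries $\ncJ{E'/F,r}$ into $\ncJ{E/F,r}$ (here the hypothesis $r \leq -1$ is used, as indicated by \S \ref{5.1}). Hence $\{\ncJ{E/F,r}\}_E$ is an inverse subsystem of $\{\Zcop[\gal{E/F}]\}_E$, and I would define
\[ \ncJ{F_\infty/F,r} = \proo{E} \ncJ{E/F,r} \subseteq \proo{E} \Zcop[\gal{E/F}] = \Lambda(G) .\]

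Next I would verify that this abelian subgroup is in fact a two-sided ideal of $\Lambda(G)$. Since inverse limits in the category of abelian groups are left exact and preserve the property of being a two-sided ideal componentwise, it suffices that each $\ncJ{E/F,r}$ be a two-sided ideal in $\Zcop[\gal{E/F}]$; this is exactly Proposition \ref{intJ is two-sided}. Explicitly, given $\lambda \in \Lambda(G)$ and $\alpha = (\alpha_E)_E \in \ncJ{F_\infty/F,r}$, the element $\lambda \alpha$ has $E$-component $\lambda_E \alpha_E$, which lies in $\ncJ{E/F,r}$ by two-sidedness at the finite level; compatibility under the transition maps is automatic since each $\pi_{E'/E}$ is a ring homomorphism sending $\ncJ{E'/F,r}$ into $\ncJ{E/F,r}$. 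The same argument works on the right.

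The only point requiring any care is that the individual ideals $\ncJ{E/F,r}$ were defined in Definition \ref{5.5} under the hypothesis of Conjecture \ref{5.2} for the abelian intermediate extensions $E^{\comm{H}}/E^H$; this is precisely the assumption of \S \ref{5.4} which we carry along, so nothing further is needed. There is no real obstacle here: the statement is essentially a formal consequence of Propositions \ref{nc quotient} and \ref{intJ is two-sided} together with the standard definition of the Iwasawa algebra as an inverse limit.
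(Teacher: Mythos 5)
Your proof is correct and is exactly the argument the paper intends: the paper states the proposition immediately after establishing Propositions~\ref{intJ is two-sided} and~\ref{nc quotient} and omits the proof as a formal consequence, which is precisely what you supply. Your identification of the two inputs (quotient-functoriality to get the inverse system, two-sidedness at each finite level inherited componentwise to the inverse limit inside $\Lambda(G) = \proo{E} \Zcop[\gal{E/F}]$) and your note that Conjecture~\ref{5.2} must be assumed so that the finite-level ideals are defined at all are all accurate.
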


In view of the annihilation discussion of \S \ref{5.4}, Proposition \ref{limit of ncJ exists} suggests
the following:
\begin{question} \label{8.2}
What is the intersection of the canonical Ore set $S$ of \cite{cfksv:main,kato:heisenberg} with $\ncJ{F_{\infty}/F , r}$?
\end{question}

In many ways the most interesting case is when $G = GL_{2}(\mathbb Z_{\prm})$ ($\prm \geq 7$) arising from the tower of $\prm$-primary torsion points on an elliptic curve over ${\mathbb Q}$ without complex multiplication \cite{coates:fragments,cfksv:main}. In this case one has particularly strong information concerning two-sided primes ideals of $\Lambda(G)$ -- see \cite{awz:reflexive}. There is a possibly alternative approach to the construction of fractional Galois ideals in ${\mathbb Q}_{\prm}[{\rm Gal}(K/ {\mathbb Q})]$ based on assuming that a type of Stark conjecture holds for the Hasse--Weil $L$-function of the elliptic curve \cite{stopple:stark}. It would be interesting to know whether this leads to the same two-sided ideal as Proposition \ref{limit of ncJ exists}.

\end{document}